\documentclass[11pt]{article}
\usepackage{amsmath,amssymb,amsthm,mathtools,hyperref}
\usepackage[margin=1in]{geometry}
\usepackage{enumerate}
\usepackage{titlesec}

\newtheorem{theorem}{Theorem}[section]
\newtheorem{thm}[theorem]{Theorem}

\newtheorem{proposition}[theorem]{Proposition}
\newtheorem{prop}[theorem]{Proposition}

\newtheorem{lemma}[theorem]{Lemma}
\newtheorem{lem}[theorem]{Lemma}

\newtheorem{corollary}[theorem]{Corollary}
\newtheorem{cor}[theorem]{Corollary}

\newtheorem{conjecture}{Conjecture}
\newtheorem{conj}[conjecture]{Conjecture}

\newtheorem{rem}[theorem]{Remark}

\theoremstyle{definition}
\newtheorem{defn}{Definition}[section]

\titleformat{\subsection}[runin]{\bfseries}{}{0pt}{}[.]
\titlespacing*{\subsection}{0pt}{1.0ex plus .3ex minus .2ex}{0.6em}
\titleformat{\paragraph}[runin]{\itshape}{}{0pt}{}[.]
\titlespacing*{\paragraph}{0pt}{0.8ex plus .2ex minus .2ex}{0.6em}

\newcommand{\F}{\mathbb{F}}
\newcommand{\GR}{\mathrm{GR}}
\newcommand{\A}{\mathbb{A}}
\newcommand{\bP}{\mathbb{P}}

\newcommand{\PP}{\mathbb{P}}

\newcommand{\LWlambda}[1]{\Lambda_{\mathrm{CM}}(#1)}
\newcommand{\LWmzero}[1]{m_{0}^{\mathrm{CM}}(#1)}
\newcommand{\APNmzero}[1]{M_{0}^{\mathrm{CM}}(#1)}

\title{\bf Reduced polynomial lifts of APN permutations over Galois rings and effective non-APN bounds}
\author{{\Large\bf Daniele Bartoli$^1$, Pantelimon St\u{a}nic\u{a}$^2$}
\vspace{.4cm}\\
$^1$ Department of Mathematics and Computer Science,\\
University of Perugia, 06123 Perugia, Italy;
\texttt{daniele.bartoli@unipg.it}\\
$^2$ Applied Mathematics Department, Naval Postgraduate School,\\
Monterey, CA 93943, USA; \texttt{pstanica@nps.edu}}
\date{\today}
\allowdisplaybreaks

\begin{document}
\maketitle

\begin{abstract}
We first clarify the formulation of the APN lifting conjecture over Galois rings. Since a function on $\F_q$ has many polynomial representatives and the values of the formal derivative depend on the representative, the conjecture must be stated for the unique reduced representative of degree less than $q$. Without this normalization the conjecture is false. With it in place, the standard permutation-polynomial criterion over Galois rings gives an exact reduction: the reduced representative of an APN permutation admits a permutation lift to $\GR(2^k,m)$, $k>1$, if and only if its formal derivative is nonzero at every point of $\F_{2^m}$. Thus the corrected lifting conjecture is equivalent to a finite-field critical-point conjecture.

We then use Janwa--Wilson--Rodier surfaces, which encode the APN condition through rational points off the diagonal arrangement, to obtain an effective nonexistence result. For odd degree outside the Gold exponents $2^r+1$ and the Kasami--Welch exponents $2^{2r}-2^r+1$, the hyperplane section at infinity has a factor that remains irreducible over an algebraic closure, by the results of Hernando--McGuire and Aubry--McGuire--Rodier. This yields an absolutely irreducible component of the surface over the ground field and off the diagonal arrangement. Applying the explicit Cafure--Matera estimate gives a completely explicit threshold above which no polynomial of the prescribed degree is APN.

For every odd $d\ge5$ outside the Gold and Kasami--Welch exponent families, we give an explicit number $\APNmzero{d}$ such that no polynomial of degree $d$ over $\F_{2^m}$ is APN when $m\ge\APNmzero{d}$. The qualitative eventual non-APN statement is due to Aubry--McGuire--Rodier; our contribution in this range is the explicit threshold. A direct identity for the difference tables gives an even-degree consequence: if $g$ has such an odd degree, then $ax+g(x^2)+c$, with $a\neq0$, has exactly the same differential uniformity as $g$ and is therefore not APN in the same explicit range. We also show directly that every cubic permutation polynomial has a rational critical point, and hence satisfies the corrected lifting conjecture.

\end{abstract}


\section{Introduction}

Almost Perfect Nonlinear (APN) functions over finite fields of characteristic two are fundamental objects in symmetric cryptography. Their importance stems from their optimal resistance to differential cryptanalysis, one of the most powerful techniques for analyzing block ciphers; standard references for APN functions and related cryptographic criteria include Budaghyan~\cite{budaghyan} and Carlet~\cite{carlet}.

\begin{defn}[Differential uniformity and APN]
Let $f : \F_{2^m} \to \F_{2^m}$ be a function. For $a \in \F_{2^m}$, define the \emph{directional derivative}
\[
D_a f(x) := f(x+a) + f(x).
\]
The \emph{differential uniformity} of $f$ is
\[
\delta_f := \max_{\substack{a \in \F_{2^m}^* \\ b \in \F_{2^m}}} \#\{x \in \F_{2^m} : D_a f(x) = b\}.
\]
We say $f$ is \emph{almost perfect nonlinear} (APN) if $\delta_f = 2$.
\end{defn}

The value $\delta_f = 2$ is optimal over fields of characteristic two: the condition $\delta_f=1$ cannot occur for nonlinear maps, and in the affine case the derivative in each nonzero direction is constant, so one gets the maximal multiplicity instead. APN functions therefore represent the best possible differential behavior in characteristic two.

When an APN function is also a permutation of $\F_{2^m}$, it is especially useful in the design of substitution boxes (S-boxes), where both nonlinearity and invertibility are required. However, APN permutations are remarkably rare. In even dimension, the only known example is Dillon's permutation over $\F_{2^6}$~\cite{BrowningDillon10}; the inverse function is APN only in odd dimension. For odd $m$, several infinite monomial families exist (Gold, Kasami, Welch, Niho, inverse, and Dobbertin), but they remain sparse and highly structured. This scarcity suggests that APN permutations are subject to algebraic obstructions beyond those imposed by the APN condition alone.

A natural question in finite field theory is whether permutation-theoretic properties persist when one lifts polynomials from finite fields to richer local rings. For integers $k,m\ge1$, the Galois ring $\GR(2^k,m)$ is the unique finite commutative local ring of characteristic $2^k$ and cardinality $2^{km}$ whose maximal ideal is $(2)$ and whose residue field $\GR(2^k,m)/(2)$ is $\F_{2^m}$. Here ``local'' means that the ring has a unique maximal ideal. For further background and the construction via basic irreducible polynomials, we refer to McDonald and Wan~\cite{mcdonald,Wan03}. If $f\in \F_{2^m}[x]$, a \emph{lift} of $f$ to $\GR(2^k,m)$ means a polynomial $F\in \GR(2^k,m)[x]$ whose coefficients reduce modulo $(2)$ to those of $f$.

Every function $\F_q\to\F_q$ has a unique polynomial representative of degree less than $q$; we call it the \emph{reduced representative}. This convention is essential in a lifting problem because two polynomials that induce the same function on $\F_q$ can have different formal derivatives and different coefficientwise lifts.

Throughout, if $f(x)=\sum_{i=0}^n a_i x^i$, then
$f'(x)=\sum_{i=1}^n i a_i x^{i-1}$ denotes its \emph{formal derivative}. A polynomial over a finite field is called a \emph{permutation polynomial} if the function it induces on that field is bijective.

The guiding problem is therefore whether the reduced representative of an APN permutation can remain a permutation after such a lift.

\begin{conj}[Reduced APN Lifting Conjecture]
\label{conj:main}
Let $q=2^m$, and let $f\in\F_q[x]$ be the reduced representative, of degree less than $q$, of an APN permutation of $\F_q$. Then no coefficientwise lift of $f$ induces a permutation of $\GR(2^k,m)$ for any $k>1$.
\end{conj}

This is the normalized form of the conjecture proposed by R\o njom and Sandrib~\cite{RS25}. The normalization cannot be omitted. Indeed, let $P\in\F_q[x]$ represent any APN permutation and put
\[
G(x)=P(x)+(x^q-x)\bigl(1+P'(x)\bigr).
\]
Then $G$ and $P$ induce the same function on $\F_q$, while
\[
G'(a)=1\qquad\text{for every }a\in\F_q.
\]
The standard Galois-ring permutation criterion recalled in Theorem~\ref{thm:gr-permutation-criterion} then shows that every coefficientwise lift of $G$ is a permutation. Thus the unnormalized statement, interpreted as a claim about arbitrary polynomial representatives, is false. Conjecture~\ref{conj:main} isolates the intended and nontrivial question by fixing the unique reduced representative.

The asymptotic geometric input used later in the paper should be interpreted with some care. In the odd-degree case outside the Gold and Kasami--Welch families, Aubry--McGuire--Rodier already proved that such polynomials are not APN over $\F_{q^n}$ for all sufficiently large $n$. Our contribution in that range is therefore quantitative: we extract an explicit threshold from the underlying geometry.

The scarcity of APN permutations fits a broader pattern in the theory of permutation polynomials. Carlitz proved that every permutation of $\F_q$ can be obtained by composing affine maps with the inversion map $x\mapsto x^{q-2}$~\cite{Carlitz53,Carlitz54}. This does not by itself constrain APN behavior, but it underscores how rigid the permutation group already is from the polynomial point of view. On the other hand, the theory of exceptional polynomials shows that polynomials inducing permutations on infinitely many extensions belong to very restricted algebraic classes; see, for example, the accounts in Lidl--Niederreiter and Mullen--Panario~\cite[Chapter~7]{LidlNiederreiter97,MullenPanario13}. These classical results do not imply Conjecture~\ref{conj:main}, but they provide a natural backdrop: permutation behavior over finite fields is already strongly constrained before one imposes the APN condition.

\textbf{Organization.} Section~\ref{sec:main-results} states the principal results. Section~\ref{sec:ag-background} records the algebraic-geometric background used for the ramification interpretation. Section~\ref{sec:reduction} derives the critical-point reformulation from the standard permutation criterion over Galois rings. Section~\ref{sec:geometry} records the precise rational ramification interpretation. Section~\ref{sec:Janwa-Wilson-Rodier} proves the effective non-APN theorem via Janwa--Wilson--Rodier surfaces and the Cafure--Matera estimate. Section~\ref{sec:examples} discusses the elementary families covered directly.

\section{Our main results}
\label{sec:main-results}

We first record the standard permutation criterion over Galois rings in the form needed here. It appears as Theorem~2 in R\o njom--Sandrib~\cite{RS25}; see also the standard references on permutation polynomials over Galois rings.

\begin{thm}[Permutation criterion over a Galois ring]
\label{thm:gr-permutation-criterion}
Let $k>1$, let $F\in\GR(2^k,m)[x]$, and let $f\in\F_{2^m}[x]$ be its coefficientwise reduction modulo $2$. Then $F$ induces a permutation of $\GR(2^k,m)$ if and only if
\begin{enumerate}[(i)]
\item $f$ induces a permutation of $\F_{2^m}$, and
\item $f'(a)\ne0$ for every $a\in\F_{2^m}$.
\end{enumerate}
\end{thm}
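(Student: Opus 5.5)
The plan is a Hensel-type argument driven by the Taylor expansion of $F$ at a point of $R=\GR(2^k,m)$ in the direction of an element of the maximal ideal. Throughout, $R$ is the local ring with maximal ideal $(2)$ and residue field $\F_{2^m}$, and $T\subset R$ is the Teichmüller set of $2^m$ representatives of the residue classes. Every $x\in R$ can be written uniquely as $x=t+2y$ with $t\in T$ and $y$ determined modulo $2^{k-1}$. The exact Taylor identity in the form I will use is
\[
F(x+2h)=F(x)+2hF'(x)+4h^2G(x,h),
\]
where $G\in R[x,h]$. It holds because $F(x+z)=\sum_j F^{[j]}(x)z^j$, with $F^{[j]}$ the Hasse derivatives, which have coefficients in $R$. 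The factor $z^j=2^jh^j$ then contains $4h^2$ for every $j\ge2$.

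For necessity, I first note that if $F$ permutes $R$, then $F$ maps each coset $t+2R$ into $F(t)+2R$. These cosets partition $R$, so bijectivity forces the induced map on residue classes to be bijective, which gives (i). For (ii), suppose $f'(\bar a)=0$ for some $\bar a$, and let $a\in T$ lift $\bar a$. Then $F'(a)\in 2R$. I set $x=a$ and $h=2^{k-2}u$ with $u\in R$. In the expansion, $2hF'(a)$ is divisible by $2^{k-1}\cdot 2=2^k$, hence vanishes. The term $4h^2G$ is divisible by $2^{2k-2}$, and $2k-2\ge k$ because $k\ge2$, so it vanishes as well. Hence $F(a+2^{k-1}u)=F(a)$ for all $u$. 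Taking $u=1$ gives two distinct points $a\neq a+2^{k-1}$ with equal image, contradicting injectivity.

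For sufficiency, I will show that $F$ is injective on each coset $a+2R$; together with (i), which makes distinct cosets map into distinct cosets, this gives bijectivity of $F$ on the finite set $R$. I induct on $k$, with the Taylor step as the base mechanism. Suppose $F(a+2y_1)=F(a+2y_2)$, and write $y_2=y_1+h$. The expansion at $x=a+2y_1$ gives
\[
2h\bigl(F'(x)+2hG(x,h)\bigr)=0 .
\]
By (ii), $F'(x)\equiv f'(\bar a)\not\equiv 0 \pmod 2$, so $F'(x)$ is a unit. Therefore $F'(x)+2hG$ is also a unit, and we get $2h=0$. That means $2y_1=2y_2$, i.e. the two points coincide.

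The main obstacle is just this unit argument: it must avoid circularity and must be valid directly in $R$, without passing to the quotient $\GR(2^{k-1},m)$. The displayed identity handles this, since it shows directly that $2h$ is annihilated by a unit and hence vanishes, so no induction on $k$ is actually needed. The only other care point is the exponent count $2k-2\ge k$ in the necessity direction, which is exactly where the hypothesis $k>1$ is used.
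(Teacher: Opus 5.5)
Your proof is correct, and it takes a genuinely different route from the paper's.

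\textbf{The paper's route.} The paper never works in $\GR(2^k,m)$ directly.
\begin{enumerate}[(a)]
\item For necessity, it first pushes a permutation of $\GR(2^k,m)$ down to a permutation of $\GR(4,m)$, using a coset-counting argument (Proposition~\ref{prop:descent-quotient}).
\item Over $\GR(4,m)$ it uses the remainder-free expansion $F(t+y)=F(t)+yF'(t)$ for $y\in I=(2)$, valid because $I^2=0$ (Lemma~\ref{lem:taylor-expansion}). The fibre map $\Delta_t$ of Lemma~\ref{lem:bijection-criterion} is then multiplication by $F'(t)$ on $I$, which is bijective exactly when $F'(t)$ is a unit.
\item Sufficiency is proved only over $\GR(4,m)$ (Proposition~\ref{prop:derivative-sufficient}). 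The general-$k$ statement is cited from R\o njom--Sandrib.
\end{enumerate}

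\textbf{Your route.} You keep the Hasse-derivative remainder $4h^2G(x,h)$ and stay in $\GR(2^k,m)$ throughout.
\begin{enumerate}[(a)]
\item For necessity you go to the bottom ideal $2^{k-1}R$ rather than to the quotient by $4$. There the linear term vanishes because $F'(a)\in 2R$, and the remainder vanishes because $2k-2\ge k$. This gives the explicit collision $F(a+2^{k-1})=F(a)$ with no descent step.
\item For sufficiency you absorb all higher-order terms into the unit $F'(x)+2hG(x,h)$. This gives injectivity on each residue coset for every $k>1$ at once.
\end{enumerate}

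\textbf{Comparison.} Your argument is a self-contained proof of the full theorem, including the sufficiency direction for $k>2$ that the paper only cites. The cost is modest: you need the integral (Hasse) form of Taylor's expansion, so that $G\in R[x,h]$. The paper's route gets an exactly linear fibre map, but only at level $k=2$, and it needs the separate descent proposition.

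One cosmetic point: the sentence announcing an induction on $k$ is leftover. As you note yourself, no induction is used, so it should be removed.
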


Consequently, once the reduced representative is fixed, the lifting question is exactly a critical-point question.

For a polynomial $f\in k[x]$, a point $a$ in an extension field of $k$ is called a \emph{critical point} if $f'(a)=0$. We say that $f$ is \emph{nowhere critical over $k$} if it has no critical point in $k$.

\begin{thm}[Reduction for the reduced representative]
\label{thm:main-reduction}
Conjecture~\textup{\ref{conj:main}} holds if and only if, for every $q=2^m$, the reduced representative $f\in\F_q[x]$ of every APN permutation of $\F_q$ has a rational critical point: there exists $a\in\F_q$ such that $f'(a)=0$.
\end{thm}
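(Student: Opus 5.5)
The plan is to apply Theorem~\ref{thm:gr-permutation-criterion} for each fixed $q=2^m$ and each APN permutation, and then take the conjunction over all of them. Fix $q=2^m$, an APN permutation of $\F_q$, and its reduced representative $f$, with $\deg f<q$. The point to note first is that the criterion is coefficientwise. For any lift $F\in\GR(2^k,m)[x]$ of $f$, whatever $k>1$ is, the reduction of $F$ modulo $2$ is exactly $f$. So conditions (i) and (ii) refer only to $f$, and the answer to ``does $F$ permute $\GR(2^k,m)$'' does not depend on which lift is chosen, nor on $k$. Condition (i) holds automatically, because $f$ represents a permutation of $\F_q$. Hence, for every $k>1$ and every lift $F$ of $f$, $F$ induces a permutation of $\GR(2^k,m)$ if and only if $f'(a)\neq 0$ for all $a\in\F_q$.

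Both directions then follow quickly. Suppose every such $f$ has a rational critical point $a\in\F_q$ with $f'(a)=0$. Then condition (ii) fails, so no lift permutes $\GR(2^k,m)$ for any $k>1$, which is Conjecture~\ref{conj:main}. Conversely, suppose some reduced representative $f$ of an APN permutation is nowhere critical over $\F_q$. I need to exhibit a lift that permutes the Galois ring. The natural choice is the Teichm\"uller lift: lift each coefficient to its Teichm\"uller representative in $\GR(4,m)$, or in fact any coefficientwise lift, since the ring surjects onto $\F_q$ and so lifts exist. Taking $k=2$, conditions (i) and (ii) both hold, so by the criterion this lift permutes $\GR(4,m)$. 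This contradicts the conjecture, and the contrapositive gives the ``only if'' direction.

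There is no substantive obstacle here. The one step that needs care is checking that the formal derivative appearing in condition (ii) is the derivative of the reduced representative $f$ itself, not of some other representative of the same function. This is exactly why the normalization matters, as the example $G$ in the introduction shows. Since the lift is taken coefficientwise from $f$, its reduction is literally $f$, so $f'$ is the right object and the equivalence goes through.
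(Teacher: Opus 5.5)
Your proof is correct. It is exactly the ``immediate consequence of Theorem~\ref{thm:gr-permutation-criterion}'' argument the paper points to, with the same two directions: a rational critical point makes condition (ii) fail for every lift and every $k>1$, and a nowhere-critical $f$ has a Teichm\"uller lift that permutes $\GR(4,m)$. The paper's written proof differs only in that, for completeness, it re-derives the two halves of the criterion it needs instead of citing it wholesale: descent from $\GR(2^k,m)$ to $\GR(4,m)$ (Proposition~\ref{prop:descent-quotient}), and the exact expansion $F(t+y)=F(t)+yF'(t)$ on the square-zero ideal $(2)$.
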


This is an immediate consequence of Theorem~\ref{thm:gr-permutation-criterion}. Section~\ref{sec:reduction} gives the short argument and recalls the exact Taylor mechanism over $\GR(4,m)$.

The reduction isolates the following finite-field statement.

\begin{conj}[Reduced Critical-Point Conjecture]
\label{conj:critical}
For every $q=2^m$, the reduced representative $f\in\F_q[x]$ of every APN permutation of $\F_q$ has a critical point in $\F_q$.
\end{conj}

Our second theorem reformulates the absence of critical points in geometric language. For its statement, $\A^1_k$ denotes the affine line over $k$, $\bP^1_k$ its projective completion, and $\widetilde f:\bP^1_k\to\bP^1_k$ the morphism induced by $f$. A finite $k$-rational point means a point of $\A^1(k)=k$, and a subscript $\bar k$ denotes extension of scalars to an algebraic closure. At a finite rational point, ``\'etale'' or ``unramified'' means that the local ramification index is one. These conventions, including the definition of ramification index, are recalled in Section~\ref{sec:ag-background}.

\begin{thm}[Geometric characterization of nowhere-critical permutations]
\label{thm:rh-obstruction}
Let $k=\F_{2^m}$ and let $f \in k[x]$ be a permutation polynomial of degree $d>1$. Then the following are equivalent.
\begin{enumerate}[(i)]
\item $f'(a)\neq 0$ for all $a\in k$.
\item The induced morphism $\tilde f:\bP^1_k\to \bP^1_k$ is \'etale at every finite $k$-rational point \textup{(equivalently, unramified at all points of $\A^1(k)$)}.
\item For every $b\in k$, the polynomial $f(x)-b$ has a simple root in $k$.
\end{enumerate}
Moreover, the unique point of $\bP^1_k$ lying over $\infty$ has ramification index $d$, and over an algebraic closure $\bar k$ the finite ramification points of $\tilde f_{\bar k}$ are precisely the zeros of $f'(x)$.
\end{thm}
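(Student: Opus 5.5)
The plan is to reduce everything to one local computation: the ramification index of $\tilde f$ at a finite point $a\in\bar k$ equals the multiplicity of $a$ as a root of $f(x)-f(a)$. Fix $a\in\bar k$ and set $b=f(a)$. Near $a$ the uniformizer is $t=x-a$, and near $b$ it is $s=y-b$. Pulling back gives $\tilde f^{*}(s)=f(x)-b=\sum_{j\ge1} c_j t^j$, with $c_1=f'(a)$ by Taylor expansion. This expansion is valid in any characteristic, since $c_1$ is the coefficient of $t$ in $f(t+a)$, which is $\sum_i i a_i a^{i-1}=f'(a)$. So the ramification index $e_a=\operatorname{ord}_t(f(x)-b)$ equals the multiplicity of $a$ as a root of $f(x)-b$. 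In particular, $e_a=1$ if and only if $f'(a)\ne0$. Ramification indices do not change under the base change $k\to\bar k$ at a $k$-rational point, so this gives at once both (i)$\Leftrightarrow$(ii) and the final claim that the finite ramification points of $\tilde f_{\bar k}$ are exactly the zeros of $f'$.

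For (i)$\Leftrightarrow$(iii) I will use the permutation hypothesis. Given $b\in k$, there is exactly one $a\in k$ with $f(a)=b$. Every root of $f(x)-b$ in $k$ is therefore $a$, and its multiplicity is $e_a$ from the computation above. Hence $f(x)-b$ has a simple root in $k$ if and only if $f'(a)\neq0$. As $b$ runs over $k$, the point $a=f^{-1}(b)$ runs over all of $k$, and this gives the equivalence. Note that (iii)$\Rightarrow$(i) really needs bijectivity, because the simple root of $f(x)-b$ must be the point $a$ in question.

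For the statement at infinity, I will use the coordinate $u=1/x$ at $\infty$ and $v=1/y$ downstairs. Write $f(x)=a_dx^d+\dots$ with $a_d\ne0$. Then $\tilde f^{*}(v)=1/f(1/u)=u^d/(a_d+a_{d-1}u+\cdots+a_0u^d)$, which has $\operatorname{ord}_u$ equal to $d$. Since $f$ is a polynomial, the only point over $\infty$ is $\infty$ itself, so it is the unique point in the fibre and its ramification index is $d$. This is consistent with $\deg\tilde f=d$.

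I expect no serious obstacle. The only step needing care is the identification of $e_a$ with root multiplicity in characteristic $2$. Here the Taylor expansion must be done with $f(t+a)$ directly, not via $f^{(j)}(a)/j!$, because of division by $j!$. I would state it using the Hasse-derivative form to avoid any characteristic issue. I will also note explicitly that the morphism is separable, since $f'\not\equiv 0$ follows from (i). This is not needed for the local statements, however.
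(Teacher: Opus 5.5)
Your proposal is correct and is essentially the paper's proof. The paper likewise gets (i)$\Leftrightarrow$(ii) and the $\bar k$ statement from the expansion $f(a+u)-f(a)=f'(a)u+u^2h(u)$, which identifies the ramification index with the root multiplicity (Theorem~\ref{thm:etale-char}, Lemma~\ref{lem:ram-index-finite}); it proves (i)$\Leftrightarrow$(iii) via the unique preimage under the permutation, and it obtains index $d$ at infinity from $v=u^d\theta(u)$ with $u=1/x$, $v=1/t$ (Lemma~\ref{lem:ram-at-infty}), exactly as you do. The one point you leave implicit is that \'etale and unramified coincide at a $k$-rational point because the residue extension is trivial; the paper records this as a convention in Definition~\ref{def:ramification-conventions}.
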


The theorem concerns only rational points. It does not assert that $f'$ has no zeros over $\overline{k}$, and hence it does not concentrate all ramification at infinity.

Our quantitative result is an explicit nonexistence theorem.

\begin{thm}[Explicit non-APN bound for generic odd degrees]
\label{thm:high-degree}
Let $d \geq 5$ be an odd integer that is neither a Gold number $($i.e.\ of the form $2^k+1$$)$ nor a Kasami--Welch number $($i.e.\ of the form $2^{2k}-2^k+1$$)$.
Set $D=d-3$. With the explicit Cafure--Matera threshold
\[
\begin{aligned}
\LWlambda{D}&:=\frac{(D-1)(D-2)+\sqrt{(D-1)^2(D-2)^2+4\bigl(5D^{13/3}+3D\bigr)}}{2},\\
\APNmzero{d}:=\LWmzero{D}&:=\Bigl\lceil 2\log_2\!\bigl(\LWlambda{D}+1\bigr)\Bigr\rceil,
\end{aligned}
\]
no polynomial of degree $d$ over $\F_{2^m}$ is APN for any $m\geq\APNmzero{d}$.
\end{thm}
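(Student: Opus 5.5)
The plan is to follow the Janwa--Wilson--Rodier approach. First reduce to a normalized polynomial, then find an absolutely irreducible component of the associated surface, and finally count its rational points off the diagonals using the Cafure--Matera estimate.

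\emph{Normalization.} Let $f\in\F_{q}[x]$, $q=2^m$, have odd degree $d$. Dividing by the leading coefficient, adding a constant, and discarding terms of degree $0$ and $1$ does not change APN-ness, so I may assume $f$ is monic with no constant or linear term. For $f$ one forms
\[
\phi(x,y,z)=\frac{f(x)+f(y)+f(z)+f(x+y+z)}{(x+y)(x+z)(y+z)}\in\F_q[x,y,z],
\]
a polynomial of degree $d-3=D$. The standard fact is that $f$ is APN if and only if every $\F_q$-rational point of the affine surface $X:\phi=0$ lies on one of the planes $x=y$, $x=z$, $y=z$. If two coordinates coincide, for instance $x=y$, the numerator vanishes identically. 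Conversely, a point with $x,y,z$ pairwise distinct gives two distinct pairs $\{x,\,x+a\}$ and $\{z,\,z+a\}$ with $a=x+y$ and the same derivative value.

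\emph{Absolutely irreducible component.} The homogeneous top-degree part of $\phi$ is $\phi_d$, the corresponding expression for the monomial $x^d$. Hernando--McGuire and Aubry--McGuire--Rodier show that for odd $d$ outside the Gold and Kasami--Welch families, the plane curve $\phi_d=0$ has an absolutely irreducible factor that is defined over $\F_2$ and occurs with multiplicity one. I would then argue the standard way. Any factorization of $\phi$ over $\bar\F_q$ induces a factorization of $\phi_d$. The Frobenius orbit of the unique factor of $\phi$ whose leading form contains this simple factor must be trivial. Hence $\phi$ has an absolutely irreducible factor $\psi$ defined over $\F_q$, of degree $\delta\le D$. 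The leading form of $\psi$ contains that factor, which is not a multiple of $(x+y)(x+z)(y+z)$. So $\psi$ is not one of the diagonal planes, and its intersection with each plane is a curve of degree $\le\delta$. I expect this descent-to-$\F_q$ step to be the main obstacle. The difficulty is tracking the multiplicity-one condition carefully so that Galois conjugates cannot share the distinguished factor.

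\emph{Point count.} Cafure--Matera gives, for an absolutely irreducible $\F_q$-hypersurface of degree $\delta$ in $\A^3$,
\[
\#\psi(\F_q)\ \ge\ q^2-(\delta-1)(\delta-2)q^{3/2}-5\delta^{13/3}q.
\]
The rational points of $\psi$ on the three planes number at most $3\delta q$, since each plane section is a plane curve of degree $\le\delta$. Using $\delta\le D$, a point off the diagonals exists once
\[
q-(D-1)(D-2)q^{1/2}-(5D^{13/3}+3D)>0 .
\]
Writing $t=\sqrt q$, this says $t$ exceeds the positive root of $t^2-(D-1)(D-2)t-(5D^{13/3}+3D)$, which is exactly $\LWlambda{D}$. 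The condition $m\ge\lceil2\log_2(\LWlambda{D}+1)\rceil$ gives $t\ge\LWlambda{D}+1>\LWlambda{D}$. Such a point is a rational point of $X$ off the diagonals, so $f$ is not APN.

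For $d\ge5$ one has $D\ge2$, so the quadratic is well defined. The degenerate case $\delta=1$, where $\psi$ is a plane distinct from the diagonals, is also covered by the inequality.
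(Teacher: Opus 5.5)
Your proposal is correct and follows essentially the same route as the paper. It uses the Janwa--Wilson--Rodier criterion, descends the Hernando--McGuire/Aubry--McGuire--Rodier simple absolutely irreducible factor of $\phi_d$ to an $\F_q$-component of $X_f$ (you redo Aubry--McGuire--Rodier's Lemma~2.1 via leading forms and Frobenius orbits instead of citing it), and then compares the Cafure--Matera lower bound with the $3Dq$ points on the diagonal planes. Two details need tightening:

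\begin{enumerate}
\item You need that the distinguished factor is not one of the linear forms $x+y$, $x+z$, $y+z$; your phrase ``not a multiple of $(x+y)(x+z)(y+z)$'' is not the right condition. The paper checks $x+y\nmid\phi_d$ directly: with $s=x+y$, the numerator equals $s(x^{d-1}+z^{d-1})+s^2P$.
\item You should verify the Cafure--Matera hypothesis $q>6\delta^2$. It follows from $\LWlambda{D}^2\ge 5D^{13/3}+3D>6D^2\ge 6\delta^2$.
\end{enumerate}
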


Thus Conjecture~\ref{conj:main} holds vacuously in this degree range above the explicit threshold. The separate cubic argument is nonvacuous.

The odd-degree result also has a direct even-degree consequence for polynomials whose derivative is a nonzero constant.

\begin{thm}[Explicit non-APN bound for a family of even degrees]
\label{thm:high-degree-even}
Let $e\ge5$ be an odd integer that is neither a Gold nor a Kasami--Welch number, and set $d=2e$. Then no polynomial of the form
\[
f(x)=ax+g(x^2)+c\in\F_{2^m}[x],\qquad a\in\F_{2^m}^\times,\ \deg g=e
\]
is APN for any $m\ge\APNmzero{e}$. Every such $f$ has $f'\equiv a\neq0$. Thus, if $f$ is a permutation, it is nowhere critical; the theorem shows that no APN permutation of this shape exists above the stated threshold, consistently with Conjecture~\textup{\ref{conj:critical}}.
\end{thm}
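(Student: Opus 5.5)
The plan is to reduce Theorem~\ref{thm:high-degree-even} to Theorem~\ref{thm:high-degree} by an exact identity between the difference tables of $f$ and of $g$. Three observations drive it.

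\emph{Step 1: squaring is an additive bijection.} In characteristic two, $(x+u)^2=x^2+u^2$, and $x\mapsto x^2$ is a bijection of $\F_{2^m}$.

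\emph{Step 2: compute the derivative of $f$.} For a direction $u\neq0$,
\[
D_uf(x)=au+g(x^2+u^2)+g(x^2)=au+(D_{u^2}g)(x^2).
\]
Fix $b\in\F_{2^m}$. Then
\[
\#\{x:D_uf(x)=b\}=\#\{y:D_{u^2}g(y)=b+au\},
\]
where we substitute $y=x^2$ and use that squaring is a bijection.

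\emph{Step 3: match the maxima.} As $u$ ranges over $\F_{2^m}^*$, the element $v=u^2$ ranges over $\F_{2^m}^*$. For fixed $u$, as $b$ ranges over $\F_{2^m}$, the element $b+au$ ranges over $\F_{2^m}$. So the multisets of entries in the difference table of $f$ (over nonzero directions) coincide with those of $g$, up to reindexing. Hence $\delta_f=\delta_g$. The constant $c$ plays no role, and neither does the value of $a$; in fact the identity holds even for $a=0$. The hypothesis $a\neq0$ is used only for the remark that $f'\equiv a$.

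\emph{Step 4: apply the odd-degree bound.} Here $g$ has odd degree $e\ge5$, and $e$ is neither a Gold nor a Kasami--Welch number. By Theorem~\ref{thm:high-degree}, $g$ is not APN for $m\ge\APNmzero{e}$, so $\delta_g>2$. Therefore $\delta_f>2$, and $f$ is not APN. Note also that $\deg f=2e=d$, since the term $ax$ has degree $1<2e$.

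\emph{Step 5: the derivative remark.} Formally, $\frac{d}{dx}g(x^2)=2x\,g'(x^2)=0$ in characteristic two, so $f'\equiv a\neq0$. By Theorem~\ref{thm:gr-permutation-criterion}, if $f$ is a permutation then it is nowhere critical. The theorem just proved then excludes such $f$ from being APN, which is consistent with Conjecture~\ref{conj:critical}.

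The main point needing care is Step 3, namely that the reindexing is a genuine bijection on the pairs $(u,b)$ with $u\neq0$. This holds because the map $(u,b)\mapsto(u^2,\,b+au)$ is invertible. Everything else is routine.
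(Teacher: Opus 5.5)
Your proposal is correct and is essentially the paper's proof: the identity $D_uf(x)=au+D_{u^2}g(x^2)$ together with the bijective reindexing $(u,b)\mapsto(u^2,b+au)$ is exactly Lemma~\ref{lem:differential-transfer}, and you then apply Theorem~\ref{thm:high-degree} (Corollary~\ref{cor:nonapn-large-generic}) to $g$, as the paper does. One small slip: in Step 5, nowhere-criticality follows directly from $f'\equiv a\neq0$, whether or not $f$ is a permutation, so it needs no appeal to Theorem~\ref{thm:gr-permutation-criterion}, which concerns Galois-ring lifts.
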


\begin{cor}
\label{cor:cubic-lifting}
Every cubic permutation polynomial over $\F_{2^m}$ has a rational critical point. In particular, whenever it is the reduced representative, it has no permutation lift to $\GR(2^k,m)$ for any $k>1$.
\end{cor}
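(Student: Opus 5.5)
Write $f(x)=c_3x^3+c_2x^2+c_1x+c_0$ with $c_3\neq0$, over $\F_q$ with $q=2^m$. In characteristic two the formal derivative is
\[
f'(x)=3c_3x^2+2c_2x+c_1=c_3x^2+c_1 .
\]
Since $x\mapsto x^2$ is a bijection of $\F_q$, the equation $c_3x^2=c_1$ has exactly one solution $a=(c_1/c_3)^{q/2}\in\F_q$. So every cubic over $\F_q$ has a rational critical point, whether or not it is a permutation.

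The permutation hypothesis is needed only to make the statement nonvacuous and to connect it with the lifting question. The second assertion then follows from Theorem~\ref{thm:gr-permutation-criterion}. Let $F$ be any coefficientwise lift of $f$ to $\GR(2^k,m)$ with $k>1$. Condition (ii) of that theorem fails at the point $a$ found above, so $F$ does not induce a permutation of $\GR(2^k,m)$.

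It remains to check that being the reduced representative is consistent with degree $3$. This needs $3<q$, that is, $m\ge2$. For $m=1$ a cubic is never reduced, and the hypothesis ``whenever it is the reduced representative'' excludes that case.

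One could also note that cubic permutations exist: $x^3$ permutes $\F_{2^m}$ for odd $m$. This gives nonvacuous instances, and they are APN (Gold), so they satisfy Conjecture~\ref{conj:critical}.

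No step is a real obstacle. The only points needing care are that the $x$-coefficient of $f'$ vanishes because $2=0$, and that square roots exist uniquely in characteristic two.
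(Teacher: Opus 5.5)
Your proof is correct and follows the paper's route. The paper's Lemma~\ref{lem:degree-3} likewise reduces $f'$ to $c_3x^2+c_1$ and uses bijectivity of squaring; it splits into the cases $c_1=0$ and $c_1\neq0$, which your formula $a=(c_1/c_3)^{q/2}$ handles uniformly. The lifting assertion is then read off from Theorem~\ref{thm:gr-permutation-criterion} exactly as you do.
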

\begin{proof}
The critical-point assertion is Lemma~\ref{lem:degree-3}; the lifting assertion follows from Theorem~\ref{thm:gr-permutation-criterion}.
\end{proof}

\begin{rem}
The restriction to odd $d$ is natural for the Janwa--Wilson--Rodier-surface argument. The Gold and Kasami--Welch exponents are genuinely exceptional: in those cases the associated surface has additional symmetry and reducibility, so the generic irreducibility argument used here does not apply directly.
\end{rem}

\begin{rem}[Relation with earlier asymptotic results]
The qualitative eventual non-APN statement in Theorem~\textup{\ref{thm:high-degree}} is due to Aubry--McGuire--Rodier. The new content here is the displayed explicit threshold, obtained by combining their geometric component with the Cafure--Matera point estimate. The theorem should therefore be read as an effective form of the known eventual nonexistence result, not as a new mechanism forcing critical points in existing APN permutations.
\end{rem}

\section{Algebraic geometry background}
\label{sec:ag-background}

We collect here the algebraic-geometric facts used later in the paper, so that the subsequent arguments can focus on the interaction between APN permutations and the geometry of the associated morphisms and surfaces. Standard references for this material are Hartshorne for basic algebraic geometry and Stichtenoth for the function-field viewpoint on curves over finite fields~\cite{Hartshorne77,Stichtenoth09}.
Some results in this section are standard or straightforward; we include them here for convenient reference.

\begin{defn}[Geometric notation and conventions]
For a field $k$, $\A^n_k$ denotes affine $n$-space over $k$, with coordinate ring $k[x_1,\ldots,x_n]$, and $\A^n(k)=k^n$ denotes its set of $k$-rational points. The projective $n$-space $\bP^n_k$ is the space with homogeneous coordinates $[X_0:\cdots:X_n]$; its standard affine charts $X_i\ne0$ are copies of $\A^n_k$. In particular, $\bP^1_k$ is the projective completion of $\A^1_k$ obtained by adjoining the point $\infty=[1:0]$. We omit the subscript $k$ when the base field is clear.

For polynomials $h_1,\ldots,h_r$, we write $Z(h_1,\ldots,h_r)$ for their common zero set in the indicated affine or projective space. The \emph{projective closure} of an affine variety $X\subseteq\A^n_k$ is the smallest projective algebraic set containing its image under the standard embedding $(x_1,\ldots,x_n)\mapsto[x_1:\cdots:x_n:1]$.

A variety or component over $k$ is \emph{absolutely irreducible} if it remains irreducible after extending scalars to an algebraic closure $\bar k$. It is \emph{geometrically integral} if that scalar extension is both irreducible and reduced. All irreducible components below are given their reduced induced structure. A hypersurface is \emph{reduced} if its defining polynomial has no repeated irreducible factor over $\bar k$.

We use the standard dimension terminology: a curve has dimension one, a surface has dimension two, and the codimension of a subvariety is the difference of dimensions. A hyperplane in projective space is the zero set of a nonzero homogeneous linear form. The degree of a projective variety is the number of intersection points, counted with multiplicity, with a generic linear subspace of complementary dimension; the affine degree is the degree of its projective closure. Finally, \emph{smooth} means nonsingular.
\end{defn}

Every polynomial $f \in k[x]$ induces a regular map (morphism) $f : \A^1_k \to \A^1_k$ via the $k$-algebra homomorphism $k[y] \to k[x]$ sending $y \mapsto f(x)$. If $f(x) = \sum_{i=0}^d a_i x^i$ has degree $d$, this map extends uniquely to a morphism $\tilde{f} : \bP^1_k \to \bP^1_k$ given by
\[
\tilde{f}([x:y]) = \left[\sum_{i=0}^d a_i x^i y^{d-i} : y^d\right].
\]

\begin{defn}[Ramification index and \'etaleness on curves]
\label{def:ramification-conventions}
Let $\phi:C\to D$ be a finite morphism of smooth curves, let $P\in C$, and put $Q=\phi(P)$. A \emph{local parameter} at a smooth point is a function vanishing there to order one. If $u$ and $v$ are local parameters at $P$ and $Q$, respectively, then locally
\[
\phi^*v=u^{e_P}\eta(u),\qquad \eta(0)\ne0.
\]
The positive integer $e_P$ is the \emph{ramification index} of $\phi$ at $P$. The morphism is \emph{unramified} at $P$ if $e_P=1$. It is \emph{\'etale} at $P$ if it is unramified there and the induced residue-field extension is separable. At a $k$-rational point the residue-field extension is the identity, so \'etale and unramified are equivalent.
\end{defn}

\begin{thm}
\label{thm:etale-char}
Let $f \in k[x]$ be nonconstant and induce a morphism $f : \A^1_k \to \A^1_k$. Then $f$ is \'etale at the $k$-rational point $a \in k$ if and only if $f'(a) \neq 0$.
\end{thm}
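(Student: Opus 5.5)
The plan is to work directly with Definition~\ref{def:ramification-conventions}, using the natural local parameters on $\A^1_k$. At the rational point $a\in k$ the function $u=x-a$ vanishes to order one, so it is a local parameter at $a$. Put $b=f(a)\in k$. Then $v=y-b$ is a local parameter at $b$ on the target line. The pullback is $f^*v=f(x)-f(a)$, so the whole question reduces to finding the order of vanishing of $f(x)-f(a)$ at $x=a$. Since $a$ is $k$-rational, the residue-field extension is trivial, and the Definition already tells us that étale and unramified coincide there. It therefore suffices to show that $e_a=1$ if and only if $f'(a)\neq0$.

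The key step is a finite Taylor expansion that works in any characteristic, including characteristic two, and so avoids dividing by factorials. I would write $f(x)-f(a)$ as a polynomial in $u=x-a$ by substituting $x=u+a$. This gives
\[
f(u+a)-f(a)=c_1u+c_2u^2+\cdots+c_du^d,\qquad c_i\in k.
\]
The linear coefficient is $c_1=f'(a)$. To check this without factorials, note that $f(x)-f(a)=(x-a)g(x)$ with $g\in k[x]$, and that $g(a)=f'(a)$. Both facts follow from the formal derivative identity $f'(x)=g(x)+(x-a)g'(x)$, which holds by the product rule for formal derivatives. Alternatively, verify them term by term on monomials: $x^i-a^i=(x-a)\sum_{j} x^j a^{i-1-j}$, and this sum evaluated at $x=a$ equals $ia^{i-1}$.

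Hence $f^*v=u\,g(u+a)$, where $\eta(u):=g(u+a)$ is a polynomial in $u$ with $\eta(0)=f'(a)$. There are two cases.
\begin{itemize}
\item If $f'(a)\neq0$, then $\eta(0)\neq0$, so $\eta$ is a unit in the local ring at $a$ and $e_a=1$.
\item If $f'(a)=0$, then $u$ divides $\eta$, so $f^*v\in(u^2)$ and $e_a\ge2$.
\end{itemize}
The order is finite because $f$ is nonconstant, so $f(x)-f(a)\ne 0$.

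The only delicate point is bookkeeping: I must make sure the local rings and parameters are the ones the Definition uses. The local ring of $\A^1_k$ at $a$ is $k[x]_{(x-a)}$, with maximal ideal generated by $x-a$. The map $f$ sends $a$ to $b$ and pulls back $y-b$ to $f(x)-b$. Once this is stated, the characteristic-free identity $g(a)=f'(a)$ is the real content, and I do not expect a genuine obstacle.
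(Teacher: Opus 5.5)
Your proposal is correct and follows essentially the same argument as the paper: take $u=x-a$ and $v=y-f(a)$ as local parameters, expand $f(a+u)-f(a)=f'(a)u+u^2h(u)$, and read off that the ramification index is one exactly when $f'(a)\neq0$. Your factorization $f(x)-f(a)=(x-a)g(x)$ with $g(a)=f'(a)$ only adds the characteristic-free justification that the linear coefficient is $f'(a)$, which the paper uses without comment.
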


\begin{proof}
Take the local parameter $u=x-a$ at the source point $a$ and the local parameter $v=y-f(a)$ at its image. Pulling back $v$ gives
\[
f(a+u)-f(a)=f'(a)u+u^2h(u)
\]
for some $h(u)\in k[u]$. Thus the lowest power of $u$ is one exactly when $f'(a)\ne0$. By Definition~\textup{\ref{def:ramification-conventions}}, this is equivalent to ramification index one, hence to \'etaleness at the rational point $a$.
\end{proof}

\begin{cor}
\label{cor:etale-iff-nowhere-critical}
A polynomial $f \in \F_{2^m}[x]$ is \'etale at every finite $\F_{2^m}$-rational point if and only if $f'(a) \neq 0$ for all $a \in \F_{2^m}$.
\end{cor}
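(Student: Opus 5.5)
The corollary is a pointwise application of Theorem~\ref{thm:etale-char}; the plan is to quantify that theorem over all rational points. Fix $f\in\F_{2^m}[x]$ and set $k=\F_{2^m}$. First the degenerate case: if $f$ is constant, then $f'\equiv0$, so both sides of the equivalence fail. On the left, $f$ does not induce a finite morphism and is ramified everywhere, in the sense that the pullback of a local parameter vanishes identically. We exclude this case or treat it by that convention, and assume from now on that $f$ is nonconstant, so that Theorem~\ref{thm:etale-char} applies.

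For each $a\in k=\A^1(k)$, Theorem~\ref{thm:etale-char} says that $f$ is étale at the rational point $a$ if and only if $f'(a)\neq0$. Taking the conjunction over all $a\in k$ gives
\[
\bigl(\forall a\in k:\ f \text{ étale at } a\bigr)\iff\bigl(\forall a\in k:\ f'(a)\neq0\bigr),
\]
which is exactly the statement.

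The only point needing a remark is the phrase ``finite rational point''. By the conventions fixed before Theorem~\ref{thm:rh-obstruction}, this means a point of $\A^1(k)$, so the point at infinity of $\widetilde f$ is not involved. Also, étaleness at $a$ for $\widetilde f$ agrees with étaleness of its restriction to $\A^1_k$, because the condition is local. There is no real obstacle here; the mild care needed is just this bookkeeping and the constant-polynomial convention.
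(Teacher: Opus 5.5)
Your proposal is correct and matches the paper's approach: the paper states this corollary without a separate proof, as the pointwise consequence of Theorem~\ref{thm:etale-char} quantified over all $a\in\F_{2^m}=\A^1(\F_{2^m})$, which is exactly your argument. Your remark on the constant case, where Theorem~\ref{thm:etale-char} does not apply but both sides of the equivalence fail, is a small piece of bookkeeping that the paper leaves implicit.
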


Let $k$ be a field and let $f\in k[x]$ be a nonconstant polynomial of degree $d\ge 1$. Write $K=k(x)$ for the rational function field and put $t=f(x)\in K$. Then $L:=k(t)\subseteq K$, and the morphism $\tilde f:\bP^1_k\to\bP^1_k$ corresponds contravariantly to the finite extension of function fields $K/L$.

\begin{lem}
\label{lem:separable-iff-derivative-notzero}
The extension $K/L$ is separable if and only if $f'(x)\not\equiv 0$.
\end{lem}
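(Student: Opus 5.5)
The plan is to study the minimal polynomial of $x$ over $L=k(t)$ and check directly when it is separable. First, $x$ generates $K$ over $L$, and it is a root of
\[
\Phi(T):=f(T)-t\in L[T].
\]
This polynomial is irreducible over $L$. Indeed, $f(T)-t$ has degree one in $t$, so it is irreducible in $k[t][T]$, being primitive in $t$ because its $t$-coefficient is $-1$. Gauss's lemma then gives irreducibility over $k(t)$. Hence $[K:L]=d$, and $\Phi$ is the minimal polynomial of $x$ over $L$.

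Since $K=L(x)$, the extension $K/L$ is separable if and only if $x$ is separable over $L$, that is, if and only if $\Phi$ has no repeated roots. The standard criterion says an irreducible polynomial is separable exactly when its formal derivative (in $T$) is not identically zero. Here
\[
\frac{d\Phi}{dT}=f'(T),
\]
because $t$ is a constant with respect to $T$. So $K/L$ is separable if and only if $f'(T)\not\equiv0$ as a polynomial. Renaming the variable, this is the condition $f'(x)\not\equiv0$.

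To close the loop explicitly, I will also check both directions of the criterion.
\begin{itemize}
\item If $f'\not\equiv0$, then $\gcd(\Phi,\Phi')$ is a divisor of the irreducible $\Phi$ and cannot be $\Phi$ itself, since $\deg\Phi'<\deg\Phi$ and $\Phi'\neq0$. So the gcd is $1$, and $\Phi$ has distinct roots.
\item If $f'\equiv0$, then $\Phi'=0$, so $\Phi$ and $\Phi'$ share every root and every root of $\Phi$ is multiple. Concretely, in characteristic $p$ we have $f(x)=g(x^p)$, and $x$ is purely inseparable over $k(x^p)\supseteq L$.
\end{itemize}

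The only point that needs care is the irreducibility of $f(T)-t$ over $k(t)$, because that is what identifies $\Phi$ as the minimal polynomial. I expect Gauss's lemma, applied to the linear dependence on $t$, to handle it cleanly.
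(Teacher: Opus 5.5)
Your proof is correct, but it takes a different route from the paper's. You first prove that $f(T)-t$ is irreducible over $k(t)$: it is linear in $t$ with unit coefficient, hence irreducible in $k[t,T]$, and Gauss's lemma then applies. This makes it the minimal polynomial of $x$, and both directions follow at once from the criterion that an irreducible polynomial is separable if and only if its derivative is nonzero.

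The paper never needs irreducibility. For separability it only notes that $x$ is a simple root of $f(X)-t$, since $f'(x)\neq0$ in $K$. Hence $x$ is also a simple root of its minimal polynomial, which divides $f(X)-t$. For inseparability it uses the tower $L\subseteq k(x^p)\subseteq K$, where $K/k(x^p)$ is nontrivial and purely inseparable. This is the argument you mention only as an aside.

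Your route costs a Gauss-lemma step but gives the extra fact $[K:L]=d$. The paper uses this fact implicitly when it treats $\tilde f$ as a degree-$d$ map in Riemann--Hurwitz. The paper's route is shorter and uses only what each direction strictly requires.

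One point of phrasing: Gauss's lemma over $k[t]$ needs primitivity in $T$. That holds because the leading coefficient $a_d$ lies in $k^\times$, or automatically from irreducibility in $k[t,T]$ together with positive $T$-degree. The primitivity in $t$ that you cite is what gives irreducibility in $k[t,T]$, not the hypothesis of Gauss's lemma.
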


\begin{proof}
If $f'(x)\equiv0$, then $k$ has positive characteristic $p$ and $f(x)=g(x^p)$, so
$L\subseteq k(x^p)\subseteq K$. The extension $K/k(x^p)$ is nontrivial and purely inseparable; hence $K/L$ cannot be separable. Conversely, if $f'(x)\not\equiv0$, then $x$ is a simple root of $F(X):=f(X)-t\in L[X]$ because $F'(x)=f'(x)\neq0$ in $K$. Hence the minimal polynomial of $x$ over $L$ is separable, so $K/L$ is separable.
\end{proof}

\begin{lem}
\label{lem:ram-index-finite}
Let $\bar k$ be an algebraic closure of $k$. Fix $a\in \bar k$ and put $b=f(a)$. Let $P_a=[a:1]$ on the source $\bP^1_{\bar k}$ and $Q_b=[b:1]$ on the target. Then the ramification index $e(P_a\mid Q_b)$ of $\tilde f_{\bar k}$ equals the multiplicity of $a$ as a root of $f(x)-b$ in $\bar k[x]$. In particular, $e(P_a\mid Q_b)>1$ if and only if $f'(a)=0$.
\end{lem}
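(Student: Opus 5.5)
The plan is to compute the ramification index by the same Taylor-expansion argument as in Theorem~\ref{thm:etale-char}, now carried out over $\bar k$. Let $r$ be the multiplicity of $a$ as a root of $f(x)-b\in\bar k[x]$. Then $r\ge1$ because $f(a)=b$, and $r\le d$ because $f-b$ is a nonzero polynomial of degree $d$. We can factor
\[
f(x)-b=(x-a)^r g(x),\qquad g\in\bar k[x],\ g(a)\neq0.
\]

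First we fix local parameters. At $P_a=[a:1]$ on the source $\bP^1_{\bar k}$ we take $u=x-a$; it vanishes to order one there, since $P_a$ is a point of the affine chart $\A^1_{\bar k}$ with coordinate $x$. At $Q_b=[b:1]$ on the target we take $v=y-b$. The morphism $\tilde f_{\bar k}$ restricted to the affine chart is $y\mapsto f(x)$, so the pullback is
\[
\tilde f^*v=f(x)-b=u^r\,g(a+u).
\]
Write $\eta(u):=g(a+u)$. It is a polynomial in $u$, hence regular at $P_a$, and $\eta(0)=g(a)\neq0$, so $\eta$ is a unit in the local ring at $P_a$. Definition~\ref{def:ramification-conventions} then gives $e(P_a\mid Q_b)=r$ directly.

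For the last assertion, differentiate the factorization:
\[
f'(x)=r(x-a)^{r-1}g(x)+(x-a)^r g'(x).
\]
If $r\ge2$, both terms vanish at $x=a$, so $f'(a)=0$. If $r=1$, then $f'(a)=g(a)\neq0$. Hence $e>1$ if and only if $f'(a)=0$.

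The one point needing care is characteristic $2$, where for even $r$ the coefficient $r$ vanishes. This does not affect the argument, because the claim only distinguishes $r=1$ from $r\ge2$, and that case split never uses that $r$ is nonzero in $\bar k$. Equivalently, write the Taylor expansion $f(a+u)-b=f'(a)u+u^2h(u)$ as in the proof of Theorem~\ref{thm:etale-char}: then $r=1$ exactly when the linear coefficient $f'(a)$ is nonzero. No separability hypothesis is needed, since the ramification index is defined purely through the valuation of the pullback of $v$.
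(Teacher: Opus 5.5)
Your proposal is correct and matches the paper's proof: you use the same local parameters $u=x-a$ and $v=y-b$, and the same factorization $f(a+u)-b=u^r\eta(u)$ with $\eta(0)\neq0$, read off directly against Definition~\ref{def:ramification-conventions}. The only cosmetic difference is that you get the final equivalence by differentiating $(x-a)^r g(x)$. The paper instead notes that $e>1$ iff $u^2\mid f(a+u)-b$ iff $f'(a)=0$, which is the Taylor-expansion version you also mention.
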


\begin{proof}
Take the local parameters $u:=x-a$ at $P_a$ and $v:=t-b$ at $Q_b$. Write
\[
f(a+u)-b = u^e\eta(u),\qquad \eta(0)\in \bar k^\times,
\]
where $e\ge 1$ is the multiplicity of $a$ as a root of $f(x)-b$. Thus $v=u^e\eta(u)$, and Definition~\textup{\ref{def:ramification-conventions}} gives $e(P_a\mid Q_b)=e$. The final statement follows because $e>1$ is equivalent to $u^2\mid f(a+u)-b$, which is equivalent to $f'(a)=0$.
\end{proof}

\begin{lem}
\label{lem:ram-at-infty}
Let $f(x)=a_dx^d+\cdots+a_0\in k[x]$ with $a_d\ne 0$ and $d\ge 1$. Let $\infty_x\in \bP^1_k$ be the point at infinity on the source and $\infty_t\in\bP^1_k$ the point at infinity on the target. Then $\infty_x$ is the unique point lying above $\infty_t$, and
\[
e(\infty_x\mid \infty_t)=d.
\]
\end{lem}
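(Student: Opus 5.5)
The plan is to compute in the chart at infinity on both copies of $\bP^1_k$, in the same way as the proof of Lemma~\ref{lem:ram-index-finite}. Put $s=1/x$ as the local parameter at $\infty_x$ and $w=1/t$ as the local parameter at $\infty_t$, where $t=f(x)$.

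First I will show that $\infty_x$ is the only point over $\infty_t$. Using the explicit formula
\[
\tilde f([x:y])=\Bigl[\textstyle\sum_i a_i x^i y^{d-i}:y^d\Bigr],
\]
a point $[x:y]$ maps to $\infty_t=[1:0]$ exactly when $y^d=0$, that is, when $y=0$. That point is $[1:0]=\infty_x$. Indeed, at $y=0$ the first coordinate is $a_d x^d$, and this is nonzero for $x\ne0$, so the formula really defines a morphism there. The same argument works over $\bar k$. Equivalently, every finite point $a$ maps to the finite value $f(a)$.

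Next I will compute the ramification index. Pulling back $w$ gives
\[
\tilde f^*w=\frac{1}{f(x)}=\frac{s^d}{a_d+a_{d-1}s+\cdots+a_0s^d}=s^d\eta(s),
\]
where $\eta(s)=(a_d+a_{d-1}s+\cdots+a_0 s^d)^{-1}$. This $\eta$ is regular at $s=0$ and satisfies $\eta(0)=a_d^{-1}\neq0$, since $a_d\ne0$. By Definition~\ref{def:ramification-conventions}, this gives $e(\infty_x\mid\infty_t)=d$.

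As a consistency check, I could also use the valuation view: $v_{\infty_x}(f)=-d$ and $v_{\infty_t}(t)=-1$, so the index is $d$. By the fundamental identity $\sum e_if_i=[K:L]=d$, this also forces uniqueness of the point over $\infty_t$. I do not expect any real obstacle. The only point needing care is confirming that the homogenized map is well defined at $[1:0]$ and that $\eta$ is a unit in the local ring, and both follow from $a_d\ne0$.
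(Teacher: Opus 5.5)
Your proposal is correct and follows the paper's proof: the paper uses the same local parameters $u=1/x$ and $v=1/t$, obtains $v=u^d\theta(u)$ with $\theta(0)=a_d^{-1}\neq0$, and gets uniqueness from the fact that $t=f(x)$ has its only pole at $\infty_x$. That pole argument is equivalent to your observation that $y^d=0$ forces $[x:y]=[1:0]$. Your additional fundamental-identity check $\sum e_if_i=[K:L]=d$ is a valid alternative route to uniqueness, but it is not needed.
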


\begin{proof}
Use local parameters $u:=1/x$ at $\infty_x$ and $v:=1/t$ at $\infty_t$. Then
\[
t=f(1/u)=u^{-d}(a_d+a_{d-1}u+\cdots+a_0u^d),
\]
so
\[
v=\frac1t=u^d\theta(u),\qquad \theta(u)=\frac{1}{a_d+a_{d-1}u+\cdots+a_0u^d},
\]
where $\theta$ has nonzero constant term $a_d^{-1}$. Definition~\textup{\ref{def:ramification-conventions}} therefore gives $e(\infty_x\mid \infty_t)=d$. Uniqueness follows because $t=f(x)$ has a unique pole on the source projective line, namely at $\infty_x$; equivalently, $v=1/t$ vanishes above $\infty_t$ only at $\infty_x$.
\end{proof}

For the Riemann--Hurwitz formula, we now assume that $f'(x)\not\equiv0$, or more generally that the morphism under consideration is separable. Recall that a field is \emph{perfect} if all of its algebraic extensions are separable; in particular, every finite field is perfect. For a smooth projective curve $C$, $g(C)$ denotes its genus. For a finite morphism $\phi:C\to D$, $\deg(\phi)=[k(C):k(D)]$, where $k(C)$ is the field of rational functions on $C$. A divisor on a curve is a formal integer sum of its points. The \emph{different divisor} $\mathfrak D_{C/D}$ is the effective divisor whose coefficient at each point is the local different exponent, and therefore records the total ramification, including wild ramification.

\begin{thm}[Riemann--Hurwitz with different divisor]
\label{thm:riemann-hurwitz-different}
Let $\phi : C \to D$ be a finite separable morphism of smooth projective curves over a perfect field. Then
\[
2g(C)-2 = \deg(\phi)\,(2g(D)-2) + \deg(\mathfrak D_{C/D}),
\]
where $\mathfrak D_{C/D}$ is the different divisor of the extension $k(C)/k(D)$.
\end{thm}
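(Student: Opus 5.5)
The plan is to deduce the formula from the local description of the different together with the comparison of canonical divisors under pullback. Write $K=k(C)$ and $F=k(D)$, so that $K/F$ is a finite separable extension of degree $n=\deg(\phi)$. Separability is exactly what guarantees a nonzero differential: since $K/F$ is separable, we can pick a separating transcendental element $t\in F$ that is also separating for $K$. Then a nonzero rational differential $\omega=dt$ on $D$ pulls back to the nonzero differential $\phi^*\omega=dt$ on $C$.

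The first key step is the local identity at each point $P\in C$ lying over $Q=\phi(P)$:
\[
v_P(\phi^*\omega)=e_P\,v_Q(\omega)+d_P,
\]
where $d_P$ is the local different exponent. To prove it, choose local parameters $u$ at $P$ and $v$ at $Q$, and write $\omega=h\,dv$. Then
\[
\phi^*\omega=(h\circ\phi)\,\frac{dv}{du}\,du .
\]
Here $v_P(h\circ\phi)=e_P\,v_Q(h)$, and we use $v_P(dv/du)=d_P$, which is Dedekind's characterization of the different via derivatives of local generators. Over a perfect field the residue extensions are separable. After base change to $\bar k$, which preserves genus and degrees for smooth projective curves over a perfect field, we may assume $k$ is algebraically closed. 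Then the completed local extension is generated by $u$, and the different of $k[[u]]/k[[v]]$ is generated by the derivative of the minimal polynomial of $u$, which equals $dv/du$ up to a unit.

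The second step is to sum over all $P\in C$. This gives
\[
\deg(\phi^*\omega)=\sum_{Q}v_Q(\omega)\sum_{P\mid Q}e_P+\deg\mathfrak D_{C/D}.
\]
Using the fundamental identity $\sum_{P\mid Q}e_Pf_P=n$, with $f_P=1$ over $\bar k$, the first term becomes $n\deg(\omega)$.

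Finally, the degree of any nonzero canonical divisor is $2g-2$; this is a consequence of Riemann--Roch. Substituting $\deg(\phi^*\omega)=2g(C)-2$ and $\deg(\omega)=2g(D)-2$ gives the formula.

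I expect the main obstacle to be the local identity $v_P(dv/du)=d_P$ in the wildly ramified case, where the different exponent exceeds $e_P-1$. This identity is precisely why the different divisor, and not merely $\sum(e_P-1)$, appears in the formula. I would handle it by passing to completions and applying the standard result that the different of a monogenic extension of complete DVRs is generated by $g'(u)$, where $g$ is the minimal polynomial of $u$. Differentiating $g(u)=0$ relates $g'(u)$ to $dv/du$.
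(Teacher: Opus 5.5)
The paper does not prove this statement. It quotes it as a standard fact, citing Hartshorne and Stichtenoth, and uses only the specialization $\deg\mathfrak D=2d-2$ for maps $\bP^1\to\bP^1$, and only for interpretation. So there is no proof in the paper to compare against.

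Your argument is the standard textbook proof from those sources, and it is correct:
\begin{enumerate}[(i)]
\item pull back a nonzero differential $dt$;
\item establish $v_P(\phi^*\omega)=e_Pv_Q(\omega)+d_P$ using $d_P=v_P(dv/du)$;
\item sum over points using $\sum_{P\mid Q}e_Pf_P=n$;
\item conclude with $\deg K=2g-2$.
\end{enumerate}

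In the wild case you leave one step implicit: why $v_P(g'(u))=v_P(dv/du)$. The reason is that the minimal polynomial $g$ of the uniformizer $u$ over $k((v))$ is Eisenstein. Differentiating $g(u)=0$ with respect to $u$ gives $g'(u)=-(\partial_v g)(u)\,\frac{dv}{du}$, where $\partial_v$ differentiates the coefficients. The Eisenstein shape makes $(\partial_v g)(u)$ a unit, so the two valuations agree.

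You also use two conventions tacitly, and both are worth stating.
\begin{itemize}
\item The identity $\deg K_C=2g(C)-2$ over $k$ requires $k$ to be the full constant field of $k(C)$ and $k(D)$, i.e.\ geometrically integral curves. The statement assumes this implicitly.
\item Passing to $\bar k$ needs the different to be compatible with the separable constant field extension $\bar k/k$. You can avoid this entirely by working over $k$ and weighting each point by $\deg P=f_P\deg Q$. Over a perfect field the residue extensions are separable, so $du$ still generates the differentials at $P$, and $d_P=v_P(dv/du)$ holds without base change.
\end{itemize}
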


Only the degree of the different divisor is used below.

For maps $\phi:\bP^1\to\bP^1$ of degree $d$, Theorem~\ref{thm:riemann-hurwitz-different} gives
\[
\deg(\mathfrak D)=2d-2.
\]

These facts will be used in Section~\ref{sec:geometry} to prove the geometric characterization of nowhere-critical permutation polynomials and to interpret the critical-point condition as a ramification condition.

\section{The reduction: From Galois rings to critical points}
\label{sec:reduction}

Theorem~\ref{thm:main-reduction} follows immediately from the standard criterion in Theorem~\ref{thm:gr-permutation-criterion}. For completeness, this section gives a direct proof in the special case needed here. The argument passes to $\GR(4,m)$ and uses the exact first-order Taylor expansion in its square-zero ideal $(2)$.


We begin with two standard facts on Galois rings that will be used repeatedly.

\begin{defn}
\label{def:teich}
Let $R=\GR(2^k,m)$. The set $T\subset R$ of roots of $x^{2^m}-x$ is called the \emph{Teichm\"uller system} of $R$. Equivalently, $T$ is the unique multiplicative system of representatives for the quotient map $R\to R/(2)\cong \F_{2^m}$.
\end{defn}

\begin{lem}
\label{lem:can_decomp}
Let $R=\GR(2^k,m)$, let $I=(2)$, and let $T\subseteq R$ be the Teichm\"uller system \textup{(Definition~\ref{def:teich})}. Then every $x\in R$ admits a unique expansion
\[
x=\sum_{i=0}^{k-1}2^i t_i\qquad (t_i\in T).
\]
Equivalently, the reduction map $T\to R/I\cong\F_{2^m}$ is a bijection and $R$ has a unique $2$-adic digit expansion with digits in $T$.
\end{lem}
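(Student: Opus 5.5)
The plan is to follow the standard construction of the Teichmüller system via Hensel lifting, and then obtain the digit expansion by induction on $k$ using the filtration $R\supset 2R\supset 4R\supset\cdots\supset 2^kR=0$.

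\emph{Step 1: $T$ is a set of representatives.} The polynomial $x^{2^m}-x$ splits into distinct linear factors over $\F_{2^m}$. Its derivative is $2^m x^{2^m-1}-1$, which reduces to $-1$ modulo $2$, so every root is simple. Since $R$ is complete with respect to the nilpotent ideal $I=(2)$, Hensel's lemma applies: each root $\bar t\in\F_{2^m}$ lifts to a unique root $t\in R$ with $t\equiv \bar t \pmod 2$.

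It remains to see that $x^{2^m}-x$ has no other roots in $R$. Suppose $x$ is a root of $x^{2^m}-x$ in $R$. Its reduction $\bar x$ is then a root in $\F_{2^m}$. By the uniqueness part of Hensel's lemma, $x$ is the lift of $\bar x$. Hence reduction $T\to\F_{2^m}$ is a bijection, with $|T|=2^m$.

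The set $T$ is closed under multiplication, since $(st)^{2^m}=s^{2^m}t^{2^m}=st$. This gives the "multiplicative system" description in Definition~\ref{def:teich}.

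\emph{Step 2: existence of the expansion.} Given $x\in R$, let $t_0\in T$ be the unique element with $t_0\equiv x\pmod 2$. Then $x-t_0=2y$ for some $y\in R$. Now apply the same step to $y$ to choose $t_1$, and continue.

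After $k$ steps the remainder lies in $2^kR=0$, because $R$ has characteristic $2^k$. This produces $x=\sum_{i=0}^{k-1}2^it_i$.

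Formally, I would phrase this as an induction on $j$, showing that $x\equiv\sum_{i<j}2^it_i \pmod{2^j}$.

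\emph{Step 3: uniqueness.} Suppose $\sum 2^it_i=\sum 2^i s_i$ with all $t_i,s_i\in T$. Reducing modulo $2$ gives $t_0\equiv s_0$, so $t_0=s_0$ by the bijectivity in Step 1. Cancelling leaves
\[
2\Bigl(\sum_{i\ge1}2^{i-1}(t_i-s_i)\Bigr)=0.
\]

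The main subtlety is that one cannot simply divide by $2$ here, because $2$ is a zero divisor in $R$. I would handle this in one of two ways.

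The first option is a counting argument. There are $|T|^k=2^{mk}=|R|$ possible digit strings, and Step 2 shows that the map from digit strings to $R$ is surjective. A surjection between finite sets of equal size is a bijection, so uniqueness follows immediately.

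The second option uses the structure of $R$. Since $R$ is free of rank $m$ over $\mathbb{Z}/2^k$, the annihilator of $2$ is $2^{k-1}R$. So the displayed relation gives
\[
\sum_{i\ge1}2^{i-1}(t_i-s_i)\in 2^{k-1}R.
\]
One then compares digits modulo $2^{k-1}$ and proceeds by induction.

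I would use the counting argument, since it is the cleanest. The only genuine input is then the Hensel step in Step 1, which I expect to be the main point. It relies on $R$ having residue field $\F_{2^m}$ and nilpotent maximal ideal, both of which are given in the definition of $\GR(2^k,m)$.
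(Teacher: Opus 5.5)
Your proof is correct. The existence step is the same successive-lifting argument the paper uses, but you depart from the paper in two places.

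First, the paper simply cites the bijection $T\to R/I$ as the standard Teichm\"uller property. You instead prove it by Hensel lifting the simple roots of $x^{2^m}-x$ through the nilpotent ideal $(2)$. The uniqueness half of Hensel's lemma gives injectivity, hence $|T|=2^m$, which your count later needs.

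Second, for uniqueness the paper compares the two expansions successively modulo $2^2,\dots,2^k$. It recovers the $j$th digit through the identification $2^jR/2^{j+1}R\cong\F_{2^m}$, $2^ju\mapsto\bar u$. That route is structural and would also work in infinite complete analogues. However, it takes the graded-piece isomorphism as an unproved input; in effect, that is the claim that $2^ju\in 2^{j+1}R$ forces $u\in 2R$, which is exactly the zero-divisor issue you flagged.

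Your counting argument avoids that issue altogether. Surjectivity of $T^k\to R$, together with $|T^k|=2^{mk}=|R|$, gives bijectivity immediately; the cardinality $|R|=2^{km}$ is part of the definition of $\GR(2^k,m)$. Your route is shorter and more self-contained, at the price of depending on finiteness and on knowing $|R|$ in advance.
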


\begin{proof}
We argue by successive lifting modulo powers of $2$. The reduction map $\pi:R\to R/I$ restricts to a bijection $T\xrightarrow{\sim}R/I$; this is the standard Teichm\"uller representative property \cite[Chapter~XV, \S1]{mcdonald}, \cite[Chapter~4]{Wan03}.

For existence, let $x\in R$. Choose $t_0\in T$ with $\pi(t_0)=\pi(x)$. Then $x-t_0\in I$, so $x=t_0+2x_1$ for some $x_1\in R$. Repeating the same step with $x_1$, then with the next quotient, yields
\[
x=t_0+2t_1+\cdots+2^{k-1}t_{k-1}+2^k x_k.
\]
Since $2^k=0$ in $R$, the last term vanishes.

Uniqueness is the standard uniqueness of the Teichm\"uller digit expansion. More explicitly, after reduction modulo $(2)$ gives $t_0=s_0$, one compares the remaining equality successively modulo $(2^2),(2^3),\ldots,(2^k)$. At the $j$th stage, multiplication by $2^j$ identifies the next residue digit in $(2^j)/(2^{j+1})\cong\F_{2^m}$, giving $t_j=s_j$.
\end{proof}

We now record the basic congruence property of polynomial maps over Galois rings, which allows us to reduce the general case to $\GR(4,m)$.

\begin{lem} 
\label{lem:poly-cong}
Let $R = \GR(2^k, m)$ with $k \geq 1$, and let $F \in R[x]$ be a polynomial. If $x \equiv y \pmod{2^j}$ for some $1 \leq j \leq k$, then $F(x) \equiv F(y) \pmod{2^j}$.
\end{lem}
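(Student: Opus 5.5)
The plan is to reduce everything to the fact that $x-y$ divides $x^n-y^n$ in the polynomial ring $R[x,y]$, and then to use that $(2^j)$ is an ideal. Write $F(x)=\sum_{n=0}^{N}c_nx^n$ with $c_n\in R$. Then
\[
F(x)-F(y)=\sum_{n=1}^{N}c_n\,(x^n-y^n).
\]
For each $n\ge1$ we have the telescoping identity
\[
x^n-y^n=(x-y)\sum_{i=0}^{n-1}x^{i}y^{n-1-i},
\]
which holds in any commutative ring. Consequently there is a polynomial $H\in R[x,y]$ with
\[
F(x)-F(y)=(x-y)\,H(x,y)
\]
as an identity of polynomials. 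Specializing to the given elements $x,y\in R$ gives the same identity in $R$.

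Now suppose $x\equiv y\pmod{2^j}$. Then $x-y=2^j z$ for some $z\in R$, so
\[
F(x)-F(y)=2^j\,z\,H(x,y)\in(2^j).
\]
This is exactly $F(x)\equiv F(y)\pmod{2^j}$. The case $j=k$ is trivial, since $2^k=0$ forces $x=y$, and it is covered by the same argument anyway.

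No step is a real obstacle. The only point needing care is that the factorization $x^n-y^n=(x-y)(\cdots)$ is used in the commutative ring $R$ and not over a field, and the telescoping identity above takes care of that. Nothing specific to Galois rings is used beyond commutativity, so the lemma holds for any commutative ring and any ideal in place of $(2^j)$.
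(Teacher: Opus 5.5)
Your proof is correct and is essentially the paper's argument. The paper writes $x=y+2^jt$ and expands each $(y+2^jt)^i$ by the binomial theorem, noting that every term with $\ell\ge1$ carries the factor $2^{j\ell}t^\ell$. You instead use $x^n-y^n=(x-y)\sum_{i=0}^{n-1}x^iy^{n-1-i}$ to get $F(x)-F(y)=(x-y)H(x,y)$. Both are the same monomial-by-monomial check that $F(x)-F(y)$ lies in the ideal generated by $x-y$, and, as you observe, both work for any ideal of any commutative ring.
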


\begin{proof}
Write $F(x) = \sum_{i=0}^n a_i x^i$ with $a_i \in R$. Suppose $x = y + 2^j t$ for some $t \in R$. We need to show $F(x) \equiv F(y) \pmod{2^j}$.
For each monomial term, apply the binomial theorem
\begin{align*}
(y + 2^j t)^i &= \sum_{\ell=0}^i \binom{i}{\ell} y^{i-\ell} (2^j t)^\ell 
= y^i + \binom{i}{1} y^{i-1} (2^j t) + \sum_{\ell=2}^i \binom{i}{\ell} y^{i-\ell} 2^{j\ell} t^\ell \\
&= y^i + i \cdot y^{i-1} \cdot 2^j t + \sum_{\ell=2}^i \binom{i}{\ell} y^{i-\ell} 2^{j\ell} t^\ell.
\end{align*}
Every term with $\ell\ge 1$ contains the factor $(2^jt)^\ell=2^{j\ell}t^\ell$ and hence is divisible by $2^j$. Therefore $(y+2^jt)^i\equiv y^i\pmod{2^j}$.
Multiplying by $a_i$ and summing over all $i$ gives
\[
F(x) = \sum_{i=0}^n a_i (y+2^jt)^i \equiv \sum_{i=0}^n a_i y^i = F(y) \pmod{2^j}. \qedhere
\]
\end{proof}

\begin{cor}[Congruence modulo 4]
\label{cor:cong-4}
Let $F \in \GR(2^k,m)[x]$ with $k \geq 2$. If $x \equiv y \pmod{4}$, then $F(x) \equiv F(y) \pmod{4}$.
\end{cor}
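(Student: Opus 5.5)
This is the case $j=2$ of Lemma~\ref{lem:poly-cong}, so the plan is simply to specialize that lemma. Since $k\ge 2$, the index $j=2$ lies in the admissible range $1\le j\le k$. Hence the hypothesis $x\equiv y\pmod{4}$ is exactly the hypothesis $x\equiv y\pmod{2^j}$ with $j=2$, and Lemma~\ref{lem:poly-cong} gives $F(x)\equiv F(y)\pmod{4}$ at once.

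For a self-contained check, I would write $x=y+4t$ with $t\in R=\GR(2^k,m)$ and expand each monomial $a_i(y+4t)^i$ by the binomial theorem. Every term with $\ell\ge1$ carries the factor $4^\ell t^\ell$, so it lies in the ideal $(4)$. Summing over $i$ then gives $F(x)-F(y)\in(4)$.

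The only point to watch is the degenerate situation $k=2$. There the ideal $(4)$ is zero, so the statement reads $x=y\Rightarrow F(x)=F(y)$, which is trivially consistent. No step is a genuine obstacle. The only thing to make explicit is that the requirement $k\ge2$ is what makes the congruence modulo $4$ meaningful, that is, it keeps $j=2$ within the range allowed by Lemma~\ref{lem:poly-cong}.
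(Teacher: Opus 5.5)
Your proposal is correct and is exactly the paper's proof: specialize Lemma~\ref{lem:poly-cong} to $j=2$, which is admissible because $k\ge 2$. Your self-contained binomial check and your remark on the case $k=2$, where congruence modulo $4$ is equality, are both accurate.
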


\begin{proof}
Apply Lemma~\ref{lem:poly-cong} with $j = 2$. 
\end{proof}

We now show it suffices to prove the conjecture for lifts to $\GR(4,m)$ by descending to quotient rings.

\begin{prop} 
\label{prop:descent-quotient}
Let $k > 2$ and let $F \in \GR(2^k, m)[x]$ induce a permutation of $\GR(2^k,m)$. Then the reduction $\overline{F} \in \GR(4,m)[x]$ (obtained by reducing coefficients modulo $4$) induces a permutation of $\GR(4,m)$.
\end{prop}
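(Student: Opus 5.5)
The plan is to use the reduction map $\rho:\GR(2^k,m)\to\GR(4,m)$, which reduces modulo $4$. It is a surjective ring homomorphism with kernel $(4)$, since $\GR(2^k,m)/(4)\cong\GR(4,m)$. Because $\rho$ is a ring homomorphism acting coefficientwise, we have $\rho(F(x))=\overline{F}(\rho(x))$ for every $x\in\GR(2^k,m)$. It is therefore enough to show that $\overline{F}$ is surjective on $\GR(4,m)$; injectivity then follows automatically because $\GR(4,m)$ is finite.

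For surjectivity, take any $\bar y\in\GR(4,m)$. Since $\rho$ is onto, we can choose $y\in\GR(2^k,m)$ with $\rho(y)=\bar y$. Since $F$ is a permutation of $\GR(2^k,m)$, there is an $x$ with $F(x)=y$. Then
\[
\overline{F}(\rho(x))=\rho(F(x))=\rho(y)=\bar y,
\]
so $\bar y$ lies in the image of $\overline{F}$.

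Corollary~\ref{cor:cong-4} gives an equivalent way to organize the argument. It says that $F$ induces a well-defined map on the residue classes $\GR(2^k,m)/(4)$, and that this induced map is exactly $\overline{F}$. A surjection of the big ring onto itself then descends to a surjection of the quotient.

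The only point that needs care is the identification $\GR(2^k,m)/(4)\cong\GR(4,m)$, together with its compatibility with reducing the coefficients of $F$. This follows from the standard construction: $\GR(2^k,m)=(\mathbb{Z}/2^k\mathbb{Z})[x]/(h)$ for a basic irreducible $h$, and reducing modulo $4$ gives $(\mathbb{Z}/4\mathbb{Z})[x]/(\bar h)$. I would cite this from McDonald and Wan rather than prove it. There is no real obstacle beyond this.
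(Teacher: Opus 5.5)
Your argument is correct, but it goes through surjectivity, whereas the paper goes through injectivity. The paper uses Corollary~\ref{cor:cong-4} to obtain a well-defined map $\widetilde F$ on $\GR(2^k,m)/(4)$. It then proves $\widetilde F$ injective by counting. Each coset $c+(4)$ has $2^{m(k-2)}$ elements, and $F$ maps it injectively into $F(c)+(4)$, hence onto it. So two distinct cosets with the same image coset would give $F(C_1)=F(C_2)$ with $C_1\cap C_2=\varnothing$, contradicting injectivity of $F$.

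You instead use that reduction modulo $4$ is a surjective ring homomorphism that commutes with polynomial evaluation. Surjectivity of $F$ then passes immediately to $\overline{F}$, and finiteness of $\GR(4,m)$ gives bijectivity. Your route is shorter and uses only the surjectivity of $F$. It needs no cardinality count for the ideal $(4)$, so it never uses the Teichm\"uller digit expansion. It also works verbatim for any quotient of any finite commutative ring.

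The paper's route yields slightly more: $F$ maps each coset $c+(4)$ bijectively onto $F(c)+(4)$, a fiberwise statement in the spirit of Lemma~\ref{lem:bijection-criterion}. That extra information is not needed for the proposition. Both arguments rest on the same identification $\GR(2^k,m)/(4)\cong\GR(4,m)$, compatible with coefficientwise reduction, which the paper also takes as standard.
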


\begin{proof}
Since $k > 2$, the element $4 = 2^2$ is nonzero in $\GR(2^k,m)$ (the characteristic is $2^k > 4$). Let $\mathfrak{a} = (2^2) = 4R$ denote the ideal generated by $4$; by the canonical decomposition (Lemma~\ref{lem:can_decomp}), $\mathfrak{a}$ is a free $\mathbb{Z}_{2^{k-2}}$-module of rank $m$, where $\mathbb{Z}_{2^{k-2}}=\mathbb Z/2^{k-2}\mathbb Z$. Hence $|\mathfrak{a}| = 2^{m(k-2)}$.

By Corollary~\ref{cor:cong-4}, the polynomial $F$ is constant modulo $\mathfrak{a}$ on each coset of $\mathfrak{a}$, so $F$ induces a well-defined function
\[
\widetilde{F} : \GR(2^k,m) / \mathfrak{a} \longrightarrow \GR(2^k,m) / \mathfrak{a}.
\]
The quotient ring $\GR(2^k,m)/(4)$ is canonically isomorphic to $\GR(4,m)$ (both are Galois rings of characteristic $4$ and the same residue field $\F_{2^m}$).

We claim $\widetilde{F}$ is a bijection. Each coset $C = c + \mathfrak{a}$ has cardinality $|\mathfrak{a}| = 2^{m(k-2)}$. Fix two distinct cosets $C_1 = c_1 + \mathfrak{a}$ and $C_2 = c_2 + \mathfrak{a}$. By Corollary~\ref{cor:cong-4}, $F(C_i) \subseteq F(c_i) + \mathfrak{a}$, and since $F$ maps $C_i$ injectively into $F(c_i)+\mathfrak{a}$ (as $F$ is a bijection of $\GR(2^k,m)$) and both sets have the same finite cardinality, we get $F(C_i) = F(c_i) + \mathfrak{a}$.

If $\widetilde{F}(C_1) = \widetilde{F}(C_2)$, then $F(C_1)$ and $F(C_2)$ lie in the same coset modulo $\mathfrak{a}$, so $F(C_1) = F(C_2)$ as subsets of $\GR(2^k,m)$. But $C_1 \cap C_2 = \varnothing$ and $F$ is injective, so $F(C_1) \cap F(C_2) = \varnothing$. This is a contradiction. Therefore $\widetilde{F}$ is injective, hence bijective (the domain is finite).
\end{proof}

\begin{cor}
\label{cor:suffices-k2}
To prove Conjecture~\textup{\ref{conj:main}}, it suffices to show that no APN permutation polynomial over $\F_{2^m}$ lifts to a permutation polynomial over $\GR(4,m)$.
\end{cor}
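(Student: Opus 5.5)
The plan is a contrapositive argument resting on Proposition~\ref{prop:descent-quotient}. Assume the statement in Corollary~\ref{cor:suffices-k2} holds: for every $m$, no APN permutation polynomial over $\F_{2^m}$ lifts to a permutation polynomial of $\GR(4,m)$. We must deduce Conjecture~\ref{conj:main} for every $k>1$. Fix $m$, an APN permutation whose reduced representative is $f\in\F_{2^m}[x]$, and some $k>1$. Suppose, for contradiction, that a coefficientwise lift $F\in\GR(2^k,m)[x]$ of $f$ induces a permutation of $\GR(2^k,m)$.

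The argument splits according to $k$. If $k=2$, then $F$ is itself a permutation lift of $f$ to $\GR(4,m)$, which contradicts the hypothesis at once.

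If $k>2$, apply Proposition~\ref{prop:descent-quotient}. It says that the reduction $\overline F\in\GR(4,m)[x]$, obtained by reducing the coefficients of $F$ modulo $4$ via the canonical isomorphism $\GR(2^k,m)/(4)\cong\GR(4,m)$, induces a permutation of $\GR(4,m)$.

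It remains to check that $\overline F$ is still a coefficientwise lift of $f$. The reduction maps $\GR(2^k,m)\to\GR(2^k,m)/(4)\cong\GR(4,m)\to\F_{2^m}$ compose to the reduction modulo $(2)$ on $\GR(2^k,m)$. Hence the coefficients of $\overline F$ reduce modulo $2$ to those of $f$, so $\overline F$ is a permutation lift of the APN permutation polynomial $f$ to $\GR(4,m)$. This again contradicts the hypothesis, and the corollary follows.

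The only point needing care is the compatibility of the canonical isomorphism $\GR(2^k,m)/(4)\cong\GR(4,m)$ with the residue maps to $\F_{2^m}$. I would justify it by noting that the maximal ideal $(2)$ of $\GR(2^k,m)$ contains $(4)$ and maps onto the maximal ideal $(2)$ of $\GR(4,m)$, so both residue fields are identified by the same quotient. With that settled, the rest is formal. It is also worth noting that the reduced-representative hypothesis plays no role: the argument applies to any representative $f$ of the given APN permutation.
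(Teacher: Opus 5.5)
Your proof is correct and follows essentially the same route as the paper. A lift that permutes $\GR(2^k,m)$ with $k>2$ descends via Proposition~\ref{prop:descent-quotient} to a permutation lift of the same $f$ over $\GR(4,m)$, since reduction modulo $2$ factors through reduction modulo $4$; your separate $k=2$ case and your remark on the compatibility of the residue maps just make explicit what the paper leaves implicit.
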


\begin{proof}
Suppose some APN permutation $f$ over $\F_{2^m}$ lifted to a permutation $F$ over $\GR(2^k,m)$ for some $k > 2$. By Proposition~\ref{prop:descent-quotient}, the reduction $\overline{F}$ would be a permutation over $\GR(4,m)$, and $\overline{F}$ would still lift $f$ (since reduction modulo 2 is transitive). Thus a counterexample for any $k > 1$ yields a counterexample for $k = 2$.
\end{proof}


We now specialize to $R = \GR(4,m)$, where the identity $(2)^2=(4)=0$ makes the first-order Taylor expansion exact. The next lemma records the structure we need.

\begin{lem} 
\label{lem:gr4-struct}
Set $R = \GR(4,m)$ and $I = (2)$ (the maximal ideal). Then
\begin{enumerate}[(i)]
\item $I^2 = (4) = 0$ in $R$;
\item $R$ is the disjoint union of the cosets $t+I$, $t\in T$;
\item Every element of $R$ is uniquely $t + 2u$ with $t, u \in T$;
\item $|T| = 2^m$ and $|I| = 2^m$;
\item $I$ is isomorphic to $\F_{2^m}$ as an additive group via the map $2t\mapsto\bar t$ for $t\in T$.
\end{enumerate}
\end{lem}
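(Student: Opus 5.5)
The plan is to derive all five items from the Teichm\"uller digit expansion (Lemma~\ref{lem:can_decomp}) in the case $k=2$, together with the fact that $R=\GR(4,m)$ has characteristic $4$.

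For (i), the ideal $I^2$ is generated by $2\cdot 2=4$, so $I^2=(4)$. Since $R$ has characteristic $4$, we have $4=0$ in $R$, and hence $I^2=0$.

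Next I would prove (iii). Lemma~\ref{lem:can_decomp} with $k=2$ says that every $x\in R$ can be written uniquely as $x=t_0+2t_1$ with $t_0,t_1\in T$, which is exactly (iii).

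Item (ii) follows from (iii) together with the bijectivity of $T\to R/I$ from the same lemma. Every $x$ lies in the coset $t_0+I$. If two cosets $t+I$ and $t'+I$ meet, then $t$ and $t'$ have the same image in $R/I$, so $t=t'$. Hence $R$ is the disjoint union of the cosets $t+I$, $t\in T$.

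For (iv), the bijection $T\to\F_{2^m}$ gives $|T|=2^m$. For $|I|$, I would use (iii): an element $t_0+2t_1$ lies in $I$ exactly when its reduction $\bar t_0$ is $0$. Since $\bar 0=0$ and the reduction map on $T$ is injective, this forces $t_0=0$. So $I=\{2u:u\in T\}$. Uniqueness in (iii) shows that $u\mapsto 2u$ is injective on $T$, and therefore $|I|=2^m$.

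Item (v) is the only step with any content. Define $\phi:\F_{2^m}\to I$ by $\phi(\bar a)=2a$, where $a\in R$ is any lift of $\bar a$. This is well defined: two lifts differ by an element $2r\in I$, and $2\cdot 2r=4r=0$. It is additive, because lifts of a sum can be taken to be the sum of lifts. It is surjective, because every element of $I$ has the form $2u$. Surjectivity together with $|I|=2^m=|\F_{2^m}|$ makes $\phi$ bijective.

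Restricted to Teichm\"uller lifts, $\phi^{-1}$ is exactly the map $2t\mapsto\bar t$ in the statement. Its well-definedness, which is the one subtle point, is precisely the injectivity of $\phi$. I expect no real obstacle: the only care needed is to check additivity through arbitrary lifts rather than Teichm\"uller ones, since $T$ is not closed under addition.
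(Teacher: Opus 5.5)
Your proof is correct and follows the paper's overall plan: items (i)--(iv) are read off from the Teichm\"uller digit expansion of Lemma~\ref{lem:can_decomp} with $k=2$. There are two small differences.

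\begin{enumerate}[(a)]
\item For $|I|$, the paper computes $|R|/|R/I|$ from the bijection $T\times T\to R$. You instead show $I=\{2u:u\in T\}$ and use uniqueness of the expansion to see that $u\mapsto 2u$ is injective on $T$.
\item In (v), the paper defines $I\to\F_{2^m}$ by $2u\mapsto\bar u$ for arbitrary $u\in R$. It must therefore check well-definedness, i.e.\ that $2(u-v)=0$ forces $u-v\in I$, which it extracts from the digit expansion. You define the reverse map $\bar a\mapsto 2a$, whose well-definedness needs only $4=0$, and you get injectivity by counting.
\end{enumerate}

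Your version puts the one nontrivial input, uniqueness of the digit expansion, entirely into the count $|I|=2^m$. It also avoids a loose spot in the paper's injectivity step, which treats $u$ as a Teichm\"uller element although it need not be one. There, once $\bar u=0$, one has $u\in I$ and hence $2u\in I^2=0$ directly.
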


\begin{proof}
(i) In $R=\GR(4,m)$ we have $4=0$ by definition, hence $I^2=(2)^2=(4)=0$.

(ii)--(iv) Apply Lemma~\ref{lem:can_decomp} with $k=2$.
Every $x\in R$ has a unique expansion $x=t_0+2t_1$ with $t_0,t_1\in T$.
In particular, every coset modulo $I=(2)$ contains exactly one element of $T$, so
\[
R=\bigsqcup_{t\in T}(t+I).
\]
The map $T\times T\to R$, $(t_0,t_1)\mapsto t_0+2t_1$, is bijective, which gives $|T|=|R/I|=2^m$ and $|I|=|R|/|R/I|=2^m$. Notice that $T$ is generally not an additive subgroup, so this is a unique digit decomposition of sets, not a direct-sum decomposition of abelian groups.

(v) Define $\phi:I\to \F_{2^m}$ by $\phi(2u)=\overline{u}$, where $\overline{u}$ denotes reduction modulo $I$.
This is well-defined, since if $2u=2v$, then $2(u-v)=0$.
Write $u-v=t_0+2t_1$ with $t_0,t_1\in T$ (Lemma~\ref{lem:can_decomp} with $k=2$). Then
$0=2(u-v)=2t_0+4t_1=2t_0$, so $t_0\in T\cap I=\{0\}$, hence $t_0=0$ and $u-v\in I$.
Therefore $\overline{u}=\overline{v}$.
It is an additive homomorphism because reduction modulo $I$ is additive.
Surjectivity: given $\alpha\in\F_{2^m}\cong R/I$, choose $u\in T$ with $\overline{u}=\alpha$ and then $\phi(2u)=\alpha$.
Injectivity: if $\phi(2u)=0$, then $\overline{u}=0$, so $u\in I$, but $u\in T$ and $T\cap I=\{0\}$ (by uniqueness of the expansion in Lemma~\ref{lem:can_decomp}), hence $u=0$ and $2u=0$.
Thus $\phi$ is an additive isomorphism.
\end{proof}

We now establish a criterion for polynomial functions to be bijections.

\begin{lem} 
\label{lem:bijection-criterion}
Let $F : R \to R$ be induced by a polynomial $F \in R[x]$. Then $F$ is a bijection if and only if both of the following conditions hold.
\begin{enumerate}[(i)]
\item The induced map $\bar{F} : R/I \to R/I$ is a bijection, and
\item For every $t \in T$, the map $\Delta_t : I \to I$ defined by
\[
\Delta_t(y) := F(t+y) - F(t)
\]
is a bijection.
\end{enumerate}
\end{lem}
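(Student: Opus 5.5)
The plan is a fibration argument along the reduction map $R\to R/I$, with $R=\GR(4,m)$, $I=(2)$ and $T$ the Teichm\"uller system. By Lemma~\ref{lem:gr4-struct}(ii), $R$ is the disjoint union of the cosets $t+I$, $t\in T$, each of size $2^m$. Corollary~\ref{cor:cong-4}, applied with $j=1$ through Lemma~\ref{lem:poly-cong}, shows that $F$ maps each coset $t+I$ into the single coset $F(t)+I$. Consequently $F$ induces the well-defined map $\bar F$ on $R/I$, and $\bar F$ is the reduction $f$ of $F$ evaluated on $\F_{2^m}$. The restriction of $F$ to $t+I$ is, after translating source and target by $t$ and $F(t)$, exactly the map $\Delta_t:I\to I$. (That $\Delta_t$ lands in $I$ is again Lemma~\ref{lem:poly-cong}.)

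For the forward direction, assume $F$ is bijective. First I will show $\bar F$ is surjective: given a coset $c+I$, pick $x\in R$ with $F(x)=c$; then $\bar F(x+I)=c+I$. A surjective self-map of a finite set is a bijection, which gives (i). For (ii), fix $t\in T$. The map $\Delta_t$ is injective because $F$ is injective on $t+I$ and $y\mapsto t+y$, $z\mapsto z-F(t)$ are injective. Since $I$ is finite, $\Delta_t$ is then a bijection.

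For the converse, assume (i) and (ii). To show injectivity, suppose $F(x)=F(x')$, and write $x=t+y$, $x'=t'+y'$ with $t,t'\in T$ and $y,y'\in I$, using Lemma~\ref{lem:gr4-struct}(ii),(iii). Reducing modulo $I$ gives $\bar F(\bar t)=\bar F(\bar t')$, so $\bar t=\bar t'$ by (i). Then $t=t'$, because reduction is injective on $T$ by Lemma~\ref{lem:can_decomp}. Now $\Delta_t(y)=\Delta_t(y')$, and (ii) gives $y=y'$. Injectivity on the finite set $R$ implies bijectivity.

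I do not expect a real obstacle here. The only points needing care are bookkeeping ones: that $\Delta_t$ actually maps $I$ into $I$ (which is the congruence lemma), and that cosets are indexed uniquely by $T$ (which is the uniqueness in the Teichm\"uller expansion). Notably, the argument never uses $I^2=0$; that fact enters only later, when $\Delta_t$ is computed via Taylor expansion as $y\mapsto F'(t)y$.
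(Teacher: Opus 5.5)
Your proof is correct and is essentially the paper's coset-by-coset argument over $R/I$. The only differences are cosmetic: you get (i) from surjectivity of $\bar F$ plus finiteness, whereas the paper also proves injectivity of $\bar F$ separately, and you prove the converse as a direct injectivity check rather than as ``bijective on each fiber, distinct fibers to disjoint fibers.'' One small citation slip: the congruence you need is Lemma~\ref{lem:poly-cong} with $j=1$, whereas Corollary~\ref{cor:cong-4} is the $j=2$ case.
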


\begin{proof}
Let $R$ be partitioned into the cosets of $I$,
\[
R=\bigsqcup_{t\in T}(t+I),
\]
where $T$ is the Teichm\"uller system of representatives (Lemma~\ref{lem:gr4-struct}).

\medskip\noindent
($\Rightarrow$) Suppose $F:R\to R$ is bijective.
Reducing coefficients modulo $I$ gives the induced map $\bar F:R/I\to R/I$, $\bar F(\bar x)=\overline{F(x)}$.
Surjectivity of $\bar F$ is immediate from surjectivity of $F$.

For injectivity, assume $\bar F(\bar t_1)=\bar F(\bar t_2)$ for $t_1,t_2\in T$. Then $F(t_1)\equiv F(t_2)\pmod I$.
By Lemma~\ref{lem:poly-cong} (with $j=1$), for any $y\in I$ we have $F(t_i+y)\equiv F(t_i)\pmod I$, hence
\[
F(t_i+I)\subseteq F(t_i)+I\qquad (i=1,2).
\]
Thus $F(t_1+I)\subseteq F(t_1)+I=F(t_2)+I\supseteq F(t_2+I)$.
Since $F$ is injective, the disjoint sets $t_1+I$ and $t_2+I$ must have disjoint images, forcing $t_1=t_2$.
Hence $\bar F$ is injective, and therefore bijective on the finite set $R/I$.

Fix $t\in T$. The restriction $F|_{t+I}:t+I\to F(t)+I$ is a bijection between finite sets of the same cardinality if and only if
the translate map $y\mapsto F(t+y)-F(t)$ is a bijection $I\to I$, i.e.\ $\Delta_t$ is bijective.
This gives (ii).

\medskip\noindent
($\Leftarrow$) Conversely, assume (i) and (ii). Condition (i) implies that distinct cosets $t_1+I$ and $t_2+I$ have distinct images modulo $I$, hence
\[
F(t_1+I)\subseteq F(t_1)+I,\quad F(t_2+I)\subseteq F(t_2)+I,\quad \text{and } (F(t_1)+I)\cap(F(t_2)+I)=\varnothing.
\]
By (ii), for each $t\in T$ the map $\Delta_t:I\to I$ is bijective, so $F$ restricts to a bijection $t+I\to F(t)+I$.
Therefore $F$ is a bijection on each coset and sends distinct cosets to disjoint cosets, so $F$ is bijective on $R$.
\end{proof}

The next lemma supplies the exact Taylor expansion that drives the argument.

\begin{lem} 
\label{lem:taylor-expansion}
Let $F \in R[x]$ where $R = \GR(4,m)$. For any $t \in R$ and $y \in I = (2)$, we have the exact formula
\[
F(t + y) = F(t) + y F'(t),
\]
where $F'(x) := \frac{d}{dx} F(x)$ is the formal derivative of $F$.
\end{lem}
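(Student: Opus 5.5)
The identity is linear in $F$: if it holds for two polynomials it holds for any $R$-linear combination, since $F\mapsto F'$ is $R$-linear. So the plan is to reduce to monomials. Write $F(x)=\sum_{i=0}^n a_i x^i$ with $a_i\in R$; it then suffices to prove
\[
(t+y)^i = t^i + i\,t^{i-1}y
\]
for every $i\ge0$, $t\in R$, and $y\in I$. Here the term $i\,t^{i-1}y$ is read as $0$ when $i=0$. Multiplying these monomial identities by $a_i$ and summing over $i$ gives exactly $F(t+y)=F(t)+y\sum_{i\ge1} i a_i t^{i-1}=F(t)+yF'(t)$.

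For the monomial identity I would apply the binomial theorem in the commutative ring $R$:
\[
(t+y)^i=\sum_{\ell=0}^i\binom{i}{\ell}t^{i-\ell}y^\ell.
\]
The $\ell=0$ and $\ell=1$ terms give $t^i$ and $i\,t^{i-1}y$. For $\ell\ge2$, the factor $y^\ell=y^2y^{\ell-2}$ lies in $I^2$. By Lemma~\ref{lem:gr4-struct}(i), $I^2=(4)=0$. Concretely, $y=2u$ gives $y^2=4u^2=0$, so all terms with $\ell\ge2$ vanish. This proves the monomial identity.

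There is no real obstacle in this argument. The only point needing care is that the vanishing of the higher-order terms uses only $y^2=0$. It does not use any divisibility of the binomial coefficients, so it works uniformly for every $i$. The formal derivative is taken coefficientwise over $R$, with the integers $i$ acting through $\mathbb Z\to R$, so nothing special happens even when $i$ is even.
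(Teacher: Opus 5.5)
Your proof is correct and is essentially the paper's argument: expand each monomial $(t+y)^i$ by the binomial theorem, kill every term with $y^\ell$, $\ell\ge 2$, using $I^2=(4)=0$, and sum against the coefficients $a_i$ to get $F(t)+yF'(t)$. Your remark that only $y^2=0$ is needed, and no divisibility of the binomial coefficients, is accurate. Your handling of the $i=0$ term is a harmless extra bit of care.
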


\begin{proof}
Write $F(x) = \sum_{i=0}^n a_i x^i$ with $a_i \in R$.
For each monomial term, the binomial theorem gives
\begin{align*}
(t+y)^i &= \sum_{j=0}^i \binom{i}{j} t^{i-j} y^j.
\end{align*}
The crucial observation is that since $y \in I$ and $I^2 = 0$ (Lemma~\ref{lem:gr4-struct}(i)), we have $y^j = 0$ for all $j \geq 2$.
Therefore,
\begin{align*}
(t+y)^i &= \binom{i}{0} t^i y^0 + \binom{i}{1} t^{i-1} y^1 + \sum_{j=2}^i \binom{i}{j} t^{i-j} y^j 
= t^i + i t^{i-1} y.
\end{align*}
Multiplying by $a_i$ and summing over all $i$, we obtain
\begin{align*}
F(t+y) &= \sum_{i=0}^n a_i (t+y)^i 
= \sum_{i=0}^n a_i (t^i + i t^{i-1} y) \\
&= \sum_{i=0}^n a_i t^i + y \sum_{i=0}^n a_i \cdot i t^{i-1} 
= F(t) + y F'(t),
\end{align*}
where the last equality uses the definition of the formal derivative
$F'(x) = \sum_{i=1}^n i a_i x^{i-1}$. \qedhere
\end{proof}

\begin{rem}
Lemma~\textup{\ref{lem:taylor-expansion}} is exact (no higher-order error terms) because of the nilpotency $I^2 = 0$. This is the key structural property of $\GR(4,m)$ that makes the analysis tractable.
\end{rem}

We now isolate the derivative condition and show that a lift can exist only when the derivative is nowhere zero on the residue field.

\begin{prop}
\label{prop:derivative-necessary}
Let $F \in R[x]$ where $R = \GR(4,m)$, and suppose $F$ induces a permutation of $R$. Let $f \in \F_{2^m}[x]$ be the reduction of $F$ modulo the ideal $(2)$ (i.e., reduce each coefficient modulo $2$). Then
\[
f'(a) \neq 0 \quad \text{for all } a \in \F_{2^m}.
\]
\end{prop}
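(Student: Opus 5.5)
The argument combines the bijection criterion of Lemma~\ref{lem:bijection-criterion} with the exact Taylor formula of Lemma~\ref{lem:taylor-expansion}. Since $F$ permutes $R=\GR(4,m)$, condition~(ii) of Lemma~\ref{lem:bijection-criterion} applies. For every Teichm\"uller element $t\in T$, the map $\Delta_t:I\to I$, $y\mapsto F(t+y)-F(t)$, is therefore a bijection. By Lemma~\ref{lem:taylor-expansion}, $\Delta_t(y)=yF'(t)$, so $\Delta_t$ is simply multiplication by the fixed element $F'(t)$ restricted to the ideal $I=(2)$.

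Next I would translate this multiplication map to the residue field. By Lemma~\ref{lem:gr4-struct}(v), the isomorphism $\phi:I\to\F_{2^m}$, $2u\mapsto\bar u$, identifies $I$ with $\F_{2^m}$. For $y=2u$ we get $yF'(t)=2\,uF'(t)$, hence
\[
\phi\bigl(\Delta_t(2u)\bigr)=\overline{u\,F'(t)}=\bar u\cdot\overline{F'(t)}.
\]
Thus under $\phi$ the map $\Delta_t$ becomes multiplication by $\overline{F'(t)}$ on $\F_{2^m}$. Such a map is a bijection exactly when $\overline{F'(t)}\neq0$. So bijectivity of $\Delta_t$ forces $\overline{F'(t)}\ne0$ for every $t\in T$.

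Finally I would identify $\overline{F'(t)}$ with $f'(\bar t)$. Reduction modulo $(2)$ is a ring homomorphism, and formal differentiation commutes with coefficientwise reduction: the coefficient $ia_i$ reduces to $i\bar a_i$. Hence $\overline{F'}=f'$ as polynomials, and $\overline{F'(t)}=f'(\bar t)$. Because $T\to\F_{2^m}$ is a bijection (Lemma~\ref{lem:can_decomp}), every $a\in\F_{2^m}$ equals $\bar t$ for some $t\in T$, which gives $f'(a)\ne0$ for all $a$.

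There is no real obstacle here. The only care needed is in the middle step, checking that $\phi$ intertwines multiplication by $F'(t)$ with multiplication by its residue, which is well defined because $2\cdot I=0$.
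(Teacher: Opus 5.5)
Your proof is correct and follows the paper's argument. Lemma~\ref{lem:bijection-criterion}(ii) and the exact Taylor formula reduce the question to bijectivity of $y\mapsto yF'(t)$ on $I$, and then $\overline{F'(t)}=f'(\bar t)$ is applied as $t$ ranges over $T$. The only cosmetic difference is in showing that $F'(t)\notin I$: you transport the map through $\phi$ to multiplication by $\overline{F'(t)}$ on $\F_{2^m}$, whereas the paper argues directly that any $r\in I$ annihilates $I$ (because $I^2=0$).
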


\begin{proof}
We fix a Teichm\"uller representative $t \in T$. By Lemma~\ref{lem:bijection-criterion}(ii), the map
\[
\Delta_t : I \to I, \quad \Delta_t(y) = F(t+y) - F(t)
\]
must be a bijection.
By Lemma~\ref{lem:taylor-expansion}, we have
\[
\Delta_t(y) = F(t+y) - F(t) = yF'(t).
\]
Therefore $\Delta_t$ is the multiplication-by-$F'(t)$ map on the additive group $I$.

We now note that multiplication by an element $r \in R$ is a bijection of $I$ (as an additive group) if and only if $r$ is a unit, that is, an invertible element of $R$.
To see this, note that if $r$ is a unit with inverse $r^{-1}$, then the map $y \mapsto ry$ has inverse $z \mapsto r^{-1}z$, hence is bijective. Conversely, if $y \mapsto ry$ is bijective on $I$, then in particular $r \cdot 2t \neq 0$ for all $t \in T\setminus\{0\}$. This means $r \notin I$ (if $r \in I$, say $r = 2s$, then $r \cdot 2t = 2s \cdot 2t = 4st = 0$ since $(4) = 0$). Elements of $R \setminus I$ are precisely the units in the local ring $R$.
Therefore $F'(t)$ must be a unit in $R$, which means $F'(t) \notin I$.

The reduction map $R \to R/I \cong \F_{2^m}$ is a ring homomorphism. Since taking derivatives commutes with ring homomorphisms, we have
\[
\overline{F'(t)} = \overline{F'}(\bar{t}) = f'(a),
\]
where $a = \bar{t}$ is the image of $t$ in $\F_{2^m}$ and $f = \bar{F}$ is the reduction of $F$ modulo $(2)$.
Since $F'(t) \notin I$ means $\overline{F'(t)} \neq 0$ in $\F_{2^m}$, we conclude $f'(a) \neq 0$.
As $t$ ranges over all Teichm\"uller representatives $T$, the images $\bar{t}$ range over all of $\F_{2^m}$. Therefore $f'(a) \neq 0$ for all $a \in \F_{2^m}$.
\end{proof}

\begin{lem} 
\label{lem:unit-criterion}
Let $R=\GR(4,m)$ and $I=(2)$. An element $a\in R$ is a unit if and only if its reduction $\bar a\in R/I\cong \F_{2^m}$ is nonzero (equivalently, $a\notin I$).
\end{lem}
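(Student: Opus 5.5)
The plan is to prove both directions directly from the structure of $R=\GR(4,m)$ recorded in Lemma~\ref{lem:gr4-struct}, without invoking any general theory of local rings beyond what is already established. The forward direction is formal. If $a$ is a unit with $ab=1$, then reducing modulo $I$ gives $\bar a\,\bar b=1$ in $\F_{2^m}$. Hence $\bar a\neq0$, which is the same as $a\notin I$.

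For the converse, suppose $\bar a\neq 0$. By Lemma~\ref{lem:gr4-struct}(iii) we can write $a=t+2u$ with $t,u\in T$ and $t\neq 0$, since $\bar t=\bar a\ne0$. The goal is to write down an explicit inverse. First, $t$ itself is a unit. Since $t$ is a root of $x^{2^m}-x$, we have $t\cdot t^{2^m-2}=t^{2^m-1}$, and $t^{2^m-1}$ is idempotent because $(t^{2^m-1})^2=t^{2^m}t^{2^m-2}=t^{2^m-1}$.

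To see that this idempotent equals $1$, I would reduce modulo $I$. An idempotent $e$ satisfies $e(1-e)=0$, and $\bar e=\bar t^{\,2^m-1}=1$ in the field, so $1-e\in I$. Write $1-e=2w$. Then $0=e(1-e)=(1-2w)(2w)=2w-4w^2=2w$ by Lemma~\ref{lem:gr4-struct}(i), so $1-e=0$. This gives $t^{-1}=t^{2^m-2}$.

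Next, using $I^2=0$, I would check that
\[
(t+2u)\,t^{-1}(1-2ut^{-1})=(1+2ut^{-1})(1-2ut^{-1})=1-4u^2t^{-2}=1 .
\]
So $a$ is a unit with the explicit inverse $t^{-1}(1-2ut^{-1})$.

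The only mildly delicate step is showing that a nonzero Teichmüller element is invertible, that is, that $t^{2^m-1}=1$ rather than merely an idempotent lifting $1$. I handle this with the idempotent argument above, which again uses only $4=0$. An alternative that avoids it is a counting argument. Multiplication by $a$ on the finite ring $R$ is injective when $a\notin I$: if $ax=0$ with $x\neq 0$, write $x=2^j x'$ with $x'\notin I$ and reduce suitably to get a contradiction via Lemma~\ref{lem:gr4-struct}(v). Injectivity on a finite set then gives surjectivity, so $ax=1$ is solvable. I would keep the explicit-inverse argument as the main route and mention the counting argument only as a check.
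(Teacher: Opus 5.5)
Your proof is correct and essentially follows the paper. In both, the converse comes down to the identity $(1+2c)(1-2c)=1$ in $\GR(4,m)$, and your forward direction (reducing $ab=1$ modulo $I$) is just as valid as the paper's nilpotency argument ($a\in I\Rightarrow a^2=0$). The one difference is that the paper gets $ab=1+2c$ immediately by lifting $\bar a^{-1}$ to an arbitrary $b$, so your idempotent argument showing $t^{2^m-1}=1$ is correct but unnecessary, though it does give you the explicit inverse $a^{-1}=t^{2^m-2}\bigl(1-2u\,t^{2^m-2}\bigr)$.
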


\begin{proof}
If $a\in I$, then $a^2\in I^2=0$ (Lemma~\ref{lem:gr4-struct}(i)), so $a$ is nilpotent and hence not a unit.
Conversely, suppose $\bar a\neq 0$ in the field $R/I$. Choose $\bar b\in R/I$ with $\bar a\,\bar b=1$ and pick any lift $b\in R$ of $\bar b$.
Then $ab\equiv 1\pmod I$, so $ab=1+2c$ for some $c\in R$.
Since $(2c)^2=4c^2=0$, we have $(1+2c)(1-2c)=1$ in $R$.
Thus $a\cdot \bigl(b(1-2c)\bigr)=1$, so $a$ is a unit.
\end{proof}

\begin{prop}
\label{prop:derivative-sufficient}
Let $f\in \F_{2^m}[x]$ be a permutation polynomial such that $f'(a)\neq 0$ for all $a\in \F_{2^m}$.
Let $F\in \GR(4,m)[x]$ be any polynomial lift of $f$ (i.e.\ $F\bmod 2=f$ coefficient-wise).
Then $F$ induces a permutation of $\GR(4,m)$.
\end{prop}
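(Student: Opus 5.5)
The plan is to verify the two conditions of Lemma~\ref{lem:bijection-criterion} for $R=\GR(4,m)$, $I=(2)$.

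\textbf{Condition (i).} The map $\bar F:R/I\to R/I$ is just $f$ acting on $\F_{2^m}\cong R/I$. This holds because reduction modulo $I$ is a ring homomorphism, so $\overline{F(x)}=f(\bar x)$. Since $f$ is a permutation polynomial by hypothesis, condition (i) is immediate.

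\textbf{Condition (ii).} Fix $t\in T$. By Lemma~\ref{lem:taylor-expansion}, for every $y\in I$ we have
\[
\Delta_t(y)=F(t+y)-F(t)=yF'(t).
\]
So $\Delta_t$ is multiplication by $F'(t)$ on $I$.

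Formal differentiation commutes with coefficientwise reduction, so $\overline{F'(t)}=f'(\bar t)$. This is the same computation as in the proof of Proposition~\ref{prop:derivative-necessary}. By hypothesis $f'(\bar t)\neq0$, so Lemma~\ref{lem:unit-criterion} shows that $F'(t)$ is a unit of $R$.

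Multiplication by a unit $u$ maps $I$ into $I$, since $I$ is an ideal. It is bijective there, with inverse given by multiplication by $u^{-1}$, which also preserves $I$. Hence $\Delta_t:I\to I$ is a bijection for every $t\in T$.

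\textbf{Conclusion.} Both conditions of Lemma~\ref{lem:bijection-criterion} hold, so $F$ permutes $\GR(4,m)$.

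\textbf{Main obstacle.} No step is genuinely hard. The one point to state carefully is the compatibility $\overline{F'(t)}=f'(\bar t)$. It follows because $i\,a_i$ reduces to $i\,\bar a_i$ and $t^{i-1}$ reduces to $\bar t^{\,i-1}$, so the lift's derivative reduces to the derivative of the reduction. The argument works for every lift, independent of the choice of the coefficients' higher digits, and this is exactly what the statement asserts.
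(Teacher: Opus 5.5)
Your proposal is correct and follows the paper's proof essentially step for step. Like the paper, you check condition (i) of Lemma~\ref{lem:bijection-criterion} via $\bar F=f$. For (ii) you use the exact Taylor expansion $\Delta_t(y)=yF'(t)$, reduce to $\overline{F'(t)}=f'(\bar t)\neq0$, and conclude with the unit criterion of Lemma~\ref{lem:unit-criterion}.
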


\begin{proof}
Let $R=\GR(4,m)$ and $I=(2)$. By hypothesis, the reduction $\bar F:R/I\to R/I$ coincides with $f$, hence is bijective.
Fix $t\in T$. By Lemma~\ref{lem:taylor-expansion},
\[
\Delta_t(y)=F(t+y)-F(t)=yF'(t)\qquad (y\in I).
\]
Reducing modulo $I$ and using that differentiation commutes with reduction gives
\[
\overline{F'(t)}=f'(\bar t)\in \F_{2^m}^\times.
\]
Hence $F'(t)\notin I$, so $F'(t)$ is a unit by Lemma~\ref{lem:unit-criterion}, and multiplication by $F'(t)$ is a bijection on the additive group $I$.
Therefore each $\Delta_t$ is bijective.
Applying Lemma~\ref{lem:bijection-criterion} completes the proof.
\end{proof}

We can now assemble the preceding ingredients and complete the proof of Theorem~\ref{thm:main-reduction}.

\begin{proof}[Proof of Theorem~\textup{\ref{thm:main-reduction}}]
We establish both directions of the equivalence.

\medskip\noindent
\textbf{($\Leftarrow$) Critical points obstruct lifting:}
Assume every reduced representative of an APN permutation has a critical point. Let $f \in \F_{2^m}[x]$ be such a reduced representative. By assumption, there exists $a \in \F_{2^m}$ with $f'(a) = 0$.
Suppose for contradiction that $f$ lifts to a permutation $F$ over some $\GR(2^k,m)$ with $k > 1$.
By Corollary~\ref{cor:suffices-k2}, we may assume $k = 2$, i.e., $F$ is a permutation over $\GR(4,m)$.
By Proposition~\ref{prop:derivative-necessary}, the reduction $f$ of $F$ must satisfy $f'(b) \neq 0$ for all $b \in \F_{2^m}$.
This contradicts our assumption that $f'(a) = 0$ for some $a$.
Therefore $f$ does not lift to a permutation over any $\GR(2^k,m)$ with $k > 1$. Since $f$ was arbitrary, Conjecture~\ref{conj:main} holds.

\medskip\noindent
\textbf{($\Rightarrow$) Conjecture implies critical point existence:}
Assume Conjecture~\ref{conj:main} holds, that is, no APN permutation lifts to a permutation over any Galois ring.
Let $f \in \F_{2^m}[x]$ be the reduced representative of an APN permutation. We must show that $f$ has a critical point.
Suppose, toward a contradiction, that $f'(a)\neq 0$ for all $a\in \F_{2^m}$.
Choose any polynomial lift $F\in \GR(4,m)[x]$ of $f$ (for instance, lift each coefficient of $f$ to its Teichm\"uller representative in $\GR(4,m)$).
By Proposition~\ref{prop:derivative-sufficient}, this $F$ induces a permutation of $\GR(4,m)$, i.e.\ $f$ admits a lift to a Galois-ring permutation.
This contradicts Conjecture~\ref{conj:main}.
Hence our assumption was false, and $f'$ must vanish at some $a\in \F_{2^m}$, i.e.\ $f$ has a critical point.
This holds for every APN permutation $f$, completing the proof.\end{proof}

\begin{rem}
 The forward direction ($\Rightarrow$) relies on the fact that Proposition~\textup{\ref{prop:derivative-necessary}} captures the \emph{complete} obstruction from the ring structure---there are no other ring-theoretic obstructions beyond the derivative condition. This is guaranteed by the explicit bijectivity criterion (Lemma~\textup{\ref{lem:bijection-criterion}}) and exact Taylor expansion (Lemma~\textup{\ref{lem:taylor-expansion}}).
\end{rem}

\section{Rational critical points and ramification}
\label{sec:geometry}

With the preliminaries from Section~\ref{sec:ag-background} in place, we record the precise geometric interpretation of the rational critical-point condition. The qualification ``rational'' is important: nonvanishing on $\F_{2^m}$ does not exclude zeros of $f'$ over the algebraic closure.

\begin{proof}[Proof of Theorem~\textup{\ref{thm:rh-obstruction}}]
We prove the equivalence of \textup{(i)}--\textup{(iii)} and then record the ramification statements.

\smallskip
\noindent\textbf{\textup{(i)}$\Leftrightarrow$\textup{(ii)}.}
By Theorem~\ref{thm:etale-char}, the morphism $\tilde f$ is \'etale at a finite $k$-rational point $a\in\A^1(k)$ if and only if $f'(a)\neq 0$. Hence $f'(a)\neq 0$ for all $a\in k$ exactly when $\tilde f$ is \'etale at every finite $k$-rational point.

\smallskip
\noindent\textbf{\textup{(i)}$\Leftrightarrow$\textup{(iii)}.}
Since $f$ is a permutation of $k$, for each $b\in k$ there is a unique $a\in k$ with $f(a)=b$. The polynomial $f(x)-b$ has root $a$, and that root is simple if and only if $(f(x)-b)'=f'(x)$ does not vanish at $a$. Thus $f'(a)\neq 0$ for all $a\in k$ if and only if for every $b\in k$, the polynomial $f(x)-b$ has a simple root in $k$.

\smallskip
The remaining assertions follow directly from Lemmas~\ref{lem:ram-at-infty} and~\ref{lem:ram-index-finite}.
\end{proof}

If $f'(a)\neq0$ for every $a\in\F_{2^m}$, then $\tilde f$ is therefore unramified at every finite rational point. It may nevertheless ramify at finite non-rational points, namely at zeros of $f'$ in $\overline{\F}_{2^m}\setminus\F_{2^m}$. Riemann--Hurwitz gives $\deg(\mathfrak D)=2d-2$ for the total different divisor, but it does not force that divisor to be supported only at infinity. This ramification observation is interpretive and is not used in the effective argument below.

\section{Effective non-APN bounds via Janwa-Wilson-Rodier surfaces}
\label{sec:Janwa-Wilson-Rodier}

We develop a large-field technique, based on Janwa--Wilson--Rodier surfaces, that turns an absolutely irreducible component off the diagonal arrangement into an explicit non-APN bound. These surfaces exactly encode the APN condition via rational points.


Let $q = 2^m$ and $f \in \F_q[x]$. Define the \emph{Janwa-Wilson-Rodier numerator} and \emph{denominator}
\[
N_f(x,y,z) := f(x) + f(y) + f(z) + f(x+y+z), \qquad
D(x,y,z) := (x+y)(x+z)(y+z).
\]
In characteristic $2$, $D$ divides $N_f$ in $\F_q[x,y,z]$ (a standard fact; see Janwa-Wilson-Rodier~\cite{Rodier09}), so the quotient
\[
\Phi_f(x,y,z) := \frac{N_f(x,y,z)}{D(x,y,z)} \in \F_q[x,y,z]
\]
is a polynomial. Define the \emph{Janwa-Wilson-Rodier surface}
\[
X_f := \{(x,y,z) \in \A^3 : \Phi_f(x,y,z) = 0\}
\]
and the \emph{diagonal arrangement}
\[
V := \{(x,y,z) \in \A^3 : D(x,y,z) = 0\},
\]
the union of the three planes $x=y$, $x=z$, and $y=z$.

\begin{lem}
\label{lem:apn-Janwa-Wilson-Rodier}
The function $f$ is APN if and only if every $\F_q$-rational point of $X_f$ lies in $V$, i.e.,
\[
X_f(\F_q) \subseteq V(\F_q).
\]
\end{lem}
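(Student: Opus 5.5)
The plan is to translate the APN condition into a statement about solutions of $N_f(x,y,z)=0$ and then split off the factor $D$. The key steps are the standard reparametrization of the derivative equation, the observation that the diagonal planes give only trivial solutions, and a check that $\Phi_f$ detects exactly the off-diagonal zeros.

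First, reparametrize. For $a\ne0$ and $b\in\F_q$, the solutions of $D_af(x)=b$ come in pairs $\{x,x+a\}$, so $\delta_f\ge 2$ always (as $q\ge2$). Thus $f$ fails to be APN if and only if there exist $a\ne0$ and two distinct solutions $x,y$ of $D_af(\cdot)=b$ with $y\notin\{x,x+a\}$. Equivalently, there exist $x,y\in\F_q$ with $y\ne x$, $y\ne x+a$ and $f(x)+f(x+a)=f(y)+f(y+a)$.

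Put $z=x+a$. Then $x+y+z=y+a$, and the equation becomes $N_f(x,y,z)=0$. The conditions become $z\ne x$ (that is, $a\ne0$), $y\ne x$ and $y\ne z$. So $f$ is non-APN if and only if there exists $(x,y,z)\in\F_q^3$ with $x,y,z$ pairwise distinct, i.e.\ $D(x,y,z)\ne0$, and $N_f(x,y,z)=0$. Conversely, any such triple gives $a=x+z\ne0$ and two solutions $x$ and $y$ of $D_af(\cdot)=f(x)+f(z)$ that do not form the trivial pair $\{x,x+a\}$, so the equation has at least four solutions.

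Second, pass to $\Phi_f$. Since $N_f=D\cdot\Phi_f$ as polynomials, at any point with $D(x,y,z)\ne0$ we have $N_f(x,y,z)=0$ if and only if $\Phi_f(x,y,z)=0$. Hence non-APN is equivalent to the existence of a point of $X_f(\F_q)$ outside $V(\F_q)$. Taking the contrapositive gives exactly $X_f(\F_q)\subseteq V(\F_q)$ as the APN criterion.

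The only delicate point is the bookkeeping: making sure the correspondence between nontrivial solution pairs and off-diagonal triples is exact in both directions. In particular, $\Phi_f$ may well vanish on parts of $V$, but this is harmless because such points are excluded by the inclusion into $V$. The divisibility $D\mid N_f$ is taken as given from the cited standard fact.
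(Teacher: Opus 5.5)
Your proof is correct and follows essentially the same route as the paper: the solutions of $D_af(u)=b$ pair off as $\{u,u+a\}$, a second nontrivial pair is identified with a pairwise-distinct zero of $N_f$ whose fourth point is $x+y+z$, and one passes between $N_f$ and $\Phi_f$ via $N_f=D\Phi_f$ off $V$. The differences are cosmetic. You write the argument as a single chain of equivalences with $z=x+a$, whereas the paper takes $a=x+y$ and argues the two directions separately. You also make explicit the step $N_f=0\Leftrightarrow\Phi_f=0$ when $D\neq0$, which the paper uses only implicitly.
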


\begin{proof}
We prove both directions.

\smallskip
\noindent\emph{($\Rightarrow$)} Assume that $f$ is APN. Let $(x,y,z)\in X_f(\F_q)$, so
\[
f(x)+f(y)+f(z)+f(x+y+z)=0.
\]
Suppose for contradiction that $(x,y,z)\notin V(\F_q)$, so $x,y,z$ are pairwise distinct.
Set $a:=x+y\in\F_q^\times$ and $b:=f(x)+f(y)$. Then the Janwa-Wilson-Rodier relation rewrites as
\[
b=f(x)+f(y)=f(z)+f(z+a)=D_a f(z).
\]
On the other hand,
\[
D_a f(x)=f(x+a)+f(x)=f(y)+f(x)=b,\qquad D_a f(y)=f(y+a)+f(y)=f(x)+f(y)=b.
\]
Thus $D_a f(u)=b$ has the four solutions
\[
u\in\{x,\ x+a=y,\ z,\ z+a=x+y+z\}.
\]
These four solutions are distinct, $x\neq y$ since $a\neq 0$; $z\notin\{x,y\}$ by
assumption; and $z+a\notin\{x,y,z\}$ because $z+a=x+y+z$ and any equality
$z+a\in\{x,y,z\}$ would force $z\in\{x,y\}$ or $a=0$. Hence $D_a f(u)=b$ has at least
four distinct solutions, contradicting APN. Therefore $X_f(\F_q)\subseteq V(\F_q)$.

\smallskip
\noindent\emph{($\Leftarrow$)} Conversely, assume that $f$ is not APN. Then there exist
$a\in\F_q^\times$ and $b\in\F_q$ such that the equation $D_a f(u)=b$ has at least four
solutions in $\F_q$. Since $D_a f(u)=D_a f(u+a)$ for all $u$, solutions occur in disjoint
pairs $\{u,u+a\}$. Hence we may choose two solutions $x,z\in\F_q$ such that
$z\notin\{x,x+a\}$. Set $y:=x+a$ and note that $x\neq y$ and $z\notin\{x,y\}$ by
construction. Define $t:=z+a$. Then
\[
f(x)+f(y)=D_a f(x)=b=D_a f(z)=f(z)+f(t),
\]
so
\[
f(x)+f(y)+f(z)+f(t)=0.
\]
Moreover $t=z+a=z+x+y$, i.e.\ $t=x+y+z$. Thus $N_f(x,y,z)=0$, so $(x,y,z)\in X_f(\F_q)$.
Finally $(x,y,z)\notin V(\F_q)$ because $x\neq y$ and $z\notin\{x,y\}$. Hence
$X_f(\F_q)\not\subseteq V(\F_q)$.
\end{proof}

The Janwa-Wilson-Rodier criterion (Lemma~\ref{lem:apn-Janwa-Wilson-Rodier}) reduces non-APN-ness to the
existence of an $\F_q$-rational point of $X_f$ off the diagonal arrangement $V$.
Over large finite fields, such points are forced as soon as $X_f$ has a
geometrically integral component not contained in $V$. The next theorem isolates
this geometric input cleanly.

\begin{theorem}
\label{thm:lang-weil-Janwa-Wilson-Rodier}
Let $f\in\F_q[x]$ and let $X_f:\Phi_f=0$ be its Janwa-Wilson-Rodier surface. Write
\[
D:=\deg(\Phi_f)\ge1.
\]
Assume that $X_f$ has an absolutely irreducible component $S$ defined over $\F_q$
such that $S \not\subseteq V$. Set
\[
\LWlambda{D}:=\frac{(D-1)(D-2)+\sqrt{(D-1)^2(D-2)^2+4\bigl(5D^{13/3}+3D\bigr)}}{2}
\]
and
\[
\LWmzero{D}:=\Bigl\lceil 2\log_2\!\bigl(\LWlambda{D}+1\bigr)\Bigr\rceil.
\]
If $q=2^m$ and $m\ge \LWmzero{D}$, then
\[
X_f(\F_q)\setminus V(\F_q) \neq \varnothing.
\]
In particular, under the displayed inequality on $m$, the function $f$ is not APN.
\end{theorem}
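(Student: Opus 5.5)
We apply the explicit Cafure--Matera estimate to $S$. Then we subtract the points lying in the diagonal arrangement $V$, using a trivial count.

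First we bound the degree of $S$. Since $S$ is an irreducible component of the hypersurface $\Phi_f=0$ in $\A^3$, it is itself a hypersurface $Z(h)$, where $h$ is an absolutely irreducible factor of $\Phi_f$ defined over $\F_q$. Hence $\delta:=\deg S=\deg h\le D$.

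The Cafure--Matera bound for an absolutely irreducible $\F_q$-hypersurface of dimension $n-1=2$ and degree $\delta$ in $\A^3$ reads
\[
\bigl|\#S(\F_q)-q^2\bigr|\le (\delta-1)(\delta-2)q^{3/2}+5\delta^{13/3}q.
\]
Since the right-hand side is monotone in $\delta$, we may replace $\delta$ by $D$. This gives
\[
\#S(\F_q)\ge q^2-(D-1)(D-2)q^{3/2}-5D^{13/3}q.
\]

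Next we count the points of $S$ on $V$. Since $S$ is irreducible and $S\not\subseteq V$, each intersection $S\cap\{x=y\}$ (and similarly for the other two planes) is a proper closed subset of the plane, namely the plane curve $h(x,x,z)=0$. This polynomial is nonzero, since otherwise $S$ would lie in the plane. A nonzero plane polynomial of degree at most $D$ has at most $Dq$ zeros in $\F_q^2$. Summing over the three planes gives $\#(S\cap V)(\F_q)\le 3Dq$. Therefore
\[
\#\bigl(S(\F_q)\setminus V\bigr)\ge q\bigl(q-(D-1)(D-2)q^{1/2}-(5D^{13/3}+3D)\bigr).
\]

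Now put $s=\sqrt q$. The bracket is positive exactly when $s^2-(D-1)(D-2)s-(5D^{13/3}+3D)>0$, that is, when $s>\LWlambda{D}$, the positive root of this quadratic. If $m\ge\LWmzero{D}$, then
\[
s=2^{m/2}\ge \LWlambda{D}+1>\LWlambda{D}.
\]
So the bracket is positive, and $S(\F_q)\setminus V$ is nonempty. Since $S\subseteq X_f$, this yields $X_f(\F_q)\setminus V(\F_q)\neq\varnothing$. Lemma~\ref{lem:apn-Janwa-Wilson-Rodier} then shows that $f$ is not APN.

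The main obstacle is citing the Cafure--Matera estimate in exactly the right normalization. Their Theorem 5.2 is stated for absolutely irreducible $\F_q$-varieties of dimension $r$ and degree $\delta$, with the restriction $q>2(r+1)\delta^2$. The $q>2(r+1)\delta^2$ hypothesis must be checked as well. It holds here because $\sqrt q>\LWlambda{D}\ge\sqrt{5}\,D^{13/6}$, so $q>5D^{13/3}\ge 6D^2$. We should also confirm that no separate lower bound on $q$ is needed in the degenerate cases $D\le 2$; for $D=1,2$ the quadratic analysis above still applies directly.
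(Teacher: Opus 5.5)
Your proposal is correct and follows the paper's proof essentially step for step: you use the Cafure--Matera lower bound for $S$ with $\delta\le D$, then a $3Dq$ bound for the points on the three diagonal planes, then the quadratic in $\sqrt q$; your argument via $h(x,x,z)\not\equiv0$ plus Schwartz--Zippel is a cleaner justification of the paper's curve count. One small slip: $5D^{13/3}\ge 6D^2$ fails at $D=1$, so to verify $q>6\delta^2$ use instead, as the paper does, $q>\LWlambda{D}^2\ge 5D^{13/3}+3D>6D^2$, which holds for all $D\ge1$.
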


\begin{proof}
Let $S$ be an absolutely irreducible component of $X_f$ defined over $\F_q$ with
$S\not\subseteq V$. Then $S$ is a geometrically integral affine surface of degree
$\deg(S)\le D$.

\smallskip 

Let $\delta=\deg(S)\le D$. By the explicit Cafure--Matera estimate for absolutely irreducible affine hypersurfaces in $\A^3$~\cite[Thm.~5.2]{CafureMatera06}, provided $q>6\delta^2$, one has
\begin{equation}
\label{eq:LW-surface}
\#S(\mathbb{F}_q)\ \ge\ q^2-(\delta-1)(\delta-2)q^{3/2}-5\delta^{13/3}q
\ \ge\ q^2-(D-1)(D-2)q^{3/2}-5D^{13/3}q.
\end{equation}
This lower bound depends only on $D=\deg(\Phi_f)$, and is independent of $q$
and of the specific choice of $f$.

\smallskip
Since $S\not\subseteq V$ and $V$ is the union of the three planes
$H_{xy}:\,x=y$, $H_{xz}:\,x=z$, $H_{yz}:\,y=z$, each intersection $S\cap H$
is a proper closed subset of $S$ of codimension at least $1$, hence a (possibly
reducible) affine curve. Moreover, $\deg(S\cap H)\le \deg(S)\le D$ for each plane $H$.
For an affine curve $C\subset \A^2_{\F_q}$ of degree $\le D$, the trivial
Schwartz--Zippel type bound gives $\#C(\F_q)\le Dq$ (fix one coordinate and count
solutions of a nonzero univariate polynomial of degree $\le D$ on each fiber).
Applying this to each of the three plane sections yields
\begin{equation}\label{eq:V-bound}
\#(S\cap V)(\F_q)\ \le\ 3Dq.
\end{equation}

\smallskip\noindent
Combining \eqref{eq:LW-surface} and \eqref{eq:V-bound}, it suffices to have
\[
q^2-(D-1)(D-2)q^{3/2}-5D^{13/3}q\ >\ 3Dq.
\]
Writing $t=\sqrt q$, this becomes
\[
t^2-(D-1)(D-2)t-\bigl(5D^{13/3}+3D\bigr)>0.
\]
Hence it is enough to require
\[
t>\LWlambda{D}:=\frac{(D-1)(D-2)+\sqrt{(D-1)^2(D-2)^2+4\bigl(5D^{13/3}+3D\bigr)}}{2}.
\]
Thus any $q$ satisfying
\begin{equation}\label{eq:q0-choice}
q\ \ge\ q_0(D):=\bigl(\LWlambda{D}+1\bigr)^2
\end{equation}
ensures $\#S(\F_q)>\#(S\cap V)(\F_q)$, hence $S(\F_q)\setminus V(\F_q)\neq\varnothing$.
Therefore $X_f(\F_q)\setminus V(\F_q)\neq\varnothing$. By Lemma~\ref{lem:apn-Janwa-Wilson-Rodier},
this implies $f$ is not APN.

The definition of $\LWmzero{D}$ also implies the hypothesis $q>6D^2\ge6\delta^2$: indeed $\LWlambda{D}^2\ge 5D^{13/3}+3D>6D^2$ for $D\ge1$. Since $2^{\LWmzero{D}}\ge q_0(D)$ by definition of $\LWmzero{D}$, the conclusion holds
for all $m\ge \LWmzero{D}$.
\end{proof}

With the encoding (Lemma~\ref{lem:apn-Janwa-Wilson-Rodier}) and the point-forcing
mechanism (Theorem~\ref{thm:lang-weil-Janwa-Wilson-Rodier}) in place, the one
remaining ingredient is a geometric statement guaranteeing an absolutely irreducible
component of $X_f$ defined over $\F_q$ and not contained in the diagonal arrangement.
For odd degree outside the Gold and Kasami--Welch families this is obtained
by combining the proof strategy of Aubry--McGuire--Rodier with the monomial
hyperplane-section result of Hernando--McGuire.

\begin{theorem}[Aubry--McGuire--Rodier {\cite[Thm.~2.3]{AubryMcGuireRodier10}}]
\label{thm:amr-eventual-nonapn}
Let $q=2^m$ and let $g\in \F_q[x]$.
If $\deg(g)$ is odd and is neither a Gold number $2^k+1$ nor a Kasami--Welch
number $2^{2k}-2^k+1$, then $g$ is not APN over $\F_{q^n}$ for all sufficiently large $n$.
\end{theorem}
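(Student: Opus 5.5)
The plan is to follow the Aubry--McGuire--Rodier route: find an absolutely irreducible component of $X_g$, defined over $\F_q$ and not contained in $V$, and then count points with Lang--Weil. (Theorem~\ref{thm:lang-weil-Janwa-Wilson-Rodier} is the effective version of the same counting step.) Let $d=\deg g$ and $D=d-3$. Write $g=\sum_{i\le d}a_ix^i$ with $a_d\neq0$. The top-degree homogeneous part of $\Phi_g$ comes only from the monomial $a_dx^d$. It equals
\[
a_d\,\phi_d(x,y,z),\qquad \phi_d=\frac{x^d+y^d+z^d+(x+y+z)^d}{(x+y)(x+z)(y+z)},
\]
and this is nonzero of degree $D$ because $d$ is odd and not a power of two. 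So the projective closure $\overline{X_g}\subset\bP^3$ meets the plane at infinity $H_\infty$ in the curve $\phi_d(x,y,z)=0$ in $\bP^2$.

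First, I would invoke Hernando--McGuire for odd $d$ outside the Gold and Kasami--Welch families. Their result says $\phi_d$ has an absolutely irreducible factor $P$ that occurs with multiplicity one and is defined over $\F_2$. Aubry--McGuire--Rodier then give the lifting step. If $\overline{X_g}$ had no absolutely irreducible component over $\F_q$, then either $\Phi_g$ factors over $\F_q$ into nonconstant pieces without such a component, or $\Phi_g$ is irreducible over $\F_q$ but splits over $\overline{\F}_q$ into Galois-conjugate factors. In the conjugate case, the section at infinity would be a product of conjugate curves, each with the same multiplicity pattern. A reduced, $\F_2$-rational, absolutely irreducible factor of multiplicity one cannot be distributed this way. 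Carried out carefully, this shows that the unique component of $\overline{X_g}$ whose section at infinity contains $P$ is defined over $\F_q$ and is absolutely irreducible. Call it $S$.

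Next I would check that $S\not\subseteq V$. The planes in $V$ are the components of $D(x,y,z)=0$. If $S$ were one of these planes, say $x=y$, then its section at infinity would be the line $x=y$. That would force $P$ to be linear and proportional to $x+y$. Standard facts show $\phi_d$ is not divisible by $x+y$ for such $d$: on $x=y$, the numerator divided by the denominator specializes to a nonzero expression related to $g'$. Hence $S$ is a surface component off the diagonal arrangement.

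Finally, $S$ is absolutely irreducible of degree at most $D$. Lang--Weil then gives about $q^{2n}$ points over $\F_{q^n}$, while $S\cap V$ contributes only $O(q^n)$ points. For large $n$ this produces a rational point of $X_g$ outside $V$, so $g$ is not APN by Lemma~\ref{lem:apn-Janwa-Wilson-Rodier}. The main obstacle is the descent step in the second paragraph, that is, ensuring $S$ is defined over $\F_q$. I would handle it with the multiplicity-one and Frobenius-invariance argument applied to the factor $P$ at infinity, as in Aubry--McGuire--Rodier.
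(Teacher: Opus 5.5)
Your proposal is correct and follows essentially the same route as the paper's Proposition~\ref{prop:amr-geometric-consequence} combined with the point count in Theorem~\ref{thm:lang-weil-Janwa-Wilson-Rodier}: take the $\F_2$-rational absolutely irreducible factor of $\phi_d$ at infinity (Hernando--McGuire), promote it to an $\F_q$-rational absolutely irreducible component by the multiplicity-one/Frobenius argument of \cite[Lemma~2.1]{AubryMcGuireRodier10}, keep that component off the diagonal planes because $x+y\nmid\phi_d$, and then count points (qualitatively by Lang--Weil in your version, explicitly by Cafure--Matera in the paper). One sourcing nuance: the paper does not take the multiplicity-one property from Hernando--McGuire but obtains it separately, from the reducedness of the section at infinity for odd $d$ \cite[Lemma~2.2]{AubryMcGuireRodier10}.
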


Observe that Theorem~\ref{thm:amr-eventual-nonapn} already implies eventual non-APN-ness, hence also eventual nonexistence of APN permutation polynomials, in the present odd-degree range. What we extract below is not a stronger qualitative nonexistence statement, but the geometric fact needed to make the argument effective in our critical-point setting, namely the existence of an absolutely irreducible component defined over $\F_q$ and not contained in the diagonal arrangement.
\begin{proposition} 
\label{prop:amr-geometric-consequence}
Let $q=2^m$ and let $g\in \F_q[x]$ have odd degree $d$, where $d$ is neither Gold
nor Kasami--Welch. Let $\overline{X}_g\subset \PP^3$ be the projective closure of the
Janwa--Wilson--Rodier surface
\[
X_g:\ \Phi_g(U,V,W)=0,
\]
and let $\overline{V'}$ be the projective closure of
\[
V' = Z\bigl((U+V)(U+W)(V+W)\bigr)\subset \A^3_{U,V,W}.
\]
Then $\overline{X}_g$ has an absolutely irreducible component defined over $\F_q$
which is not contained in $\overline{V'}$. Consequently, $X_g$ has an absolutely
irreducible component defined over $\F_q$ not contained in $V'$.
\end{proposition}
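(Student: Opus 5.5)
The plan is to follow the Aubry--McGuire--Rodier strategy: study the hyperplane section at infinity, extract an absolutely irreducible factor there, and lift it to a component of $\overline{X}_g$.

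\emph{Step 1 (the section at infinity).} Write $g(x)=a_dx^d+\cdots$ with $a_d\neq0$. The top-degree part of $N_g$ is $a_d\bigl(U^d+V^d+W^d+(U+V+W)^d\bigr)$. Dividing by $D(U,V,W)$, the homogeneous part of $\Phi_g$ of degree $d-3$ is $a_d\,\phi_d(U,V,W)$, where
\[
\phi_d(U,V,W)=\frac{U^d+V^d+W^d+(U+V+W)^d}{(U+V)(U+W)(V+W)}.
\]
Hence $\overline{X}_g\cap\{T=0\}$ is the plane curve $\phi_d=0$.

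\emph{Step 2 (input from Hernando--McGuire and AMR).} For odd $d$ that is neither Gold nor Kasami--Welch, Hernando--McGuire (completing Janwa--Wilson and Jedlicka) show that $\phi_d$ has an irreducible factor over $\overline{\F}_2$ that is defined over $\F_2$ and occurs with multiplicity one. Call this curve $C_\infty$. Its coefficients lie in $\F_2$, so it is invariant under Galois.

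\emph{Step 3 (lifting the factor to a component).} Let $\overline{X}_g=S_1\cup\cdots\cup S_r$ be the decomposition over $\overline{\F}_q$. Each $S_i$ is a surface in $\PP^3$, so it is not contained in the plane $T=0$ (otherwise $T\mid\Phi_g^{\hom}$, but the $T=0$ part of $\Phi_g^{\hom}$ is $a_d\phi_d\neq0$). Therefore $S_i\cap\{T=0\}$ is a curve, and the union of these curves is $\phi_d=0$ counted with multiplicities. Since $C_\infty$ has multiplicity one in $\phi_d$, exactly one component $S$ contains $C_\infty$, and $C_\infty$ appears in $S\cap\{T=0\}$ with multiplicity one.

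For any $\sigma\in\mathrm{Gal}(\overline{\F}_q/\F_q)$, the image $\sigma(S)$ is again a component of $\overline{X}_g$ and contains $\sigma(C_\infty)=C_\infty$. Uniqueness then gives $\sigma(S)=S$. Thus $S$ is defined over $\F_q$ and absolutely irreducible.

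\emph{Step 4 (not contained in the diagonal arrangement).} Suppose $S\subseteq\overline{V'}$. Then $S$ equals one of the three planes $U=V$, $U=W$, $V=W$ (or their closures), so $C_\infty$ is a line $U+V=0$ or similar. I will rule this out by checking that $(U+V)$ does not divide $\phi_d$, or more simply that $C_\infty$ has degree greater than one. Hernando--McGuire's factor is nonlinear; alternatively, one evaluates $\phi_d$ at $V=U$ and checks that it is not identically zero.

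This last check is the step I expect to be the main obstacle. At $V=U$ the numerator becomes $W^d+(W)^d$, so one must expand using derivatives. A cleaner route is to cite the known fact that $\phi_d$ is coprime to $(U+V)(U+W)(V+W)$ for odd $d$. This holds because $\phi_d(U,U,W)=(U+W)^{d-1}\neq0$, which follows by expanding $N$ to second order in $V-U$.

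Finally, the affine part $S\cap\A^3$ is an absolutely irreducible component of $X_g$ defined over $\F_q$. It is not contained in $V'$, since otherwise its closure $S$ would lie in $\overline{V'}$.
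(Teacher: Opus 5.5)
Your proof is correct and follows essentially the paper's route. You identify the section at infinity as $a_d\phi_d=0$ and take the Hernando--McGuire factor defined over $\F_2$ with multiplicity one; the paper gets multiplicity one from the reducedness result, Lemma~2.2 of Aubry--McGuire--Rodier, rather than from Hernando--McGuire. You then lift it to the unique, hence Galois-stable, component $S$ containing it, which is essentially the argument behind the Aubry--McGuire--Rodier Lemma~2.1 that the paper cites. Finally, you exclude the diagonal planes by showing $U+V\nmid\phi_d$ through the expansion in $s=U+V$, as the paper does.

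One slip: that expansion gives $\phi_d(U,U,W)=(U^{d-1}+W^{d-1})/(U+W)^2$, a form of degree $d-3$, not $(U+W)^{d-1}$. It is still nonzero, so your Step~4 goes through unchanged.
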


\begin{proof}
Use homogeneous coordinates $[U:V:W:T]$ on $\PP^3$, and let $H=Z(T)$ be the hyperplane at infinity.
By \cite[Lemma~2.2]{AubryMcGuireRodier10}, the hyperplane section
$\overline{X}_g\cap H$ is reduced when $d$ is odd and not Gold or Kasami--Welch. 

Moreover, Aubry--McGuire--Rodier reduce the odd-degree case to the geometry of the
hyperplane section at infinity associated with the top-degree term; see the proof of
\cite[Thm.~2.3]{AubryMcGuireRodier10}. In the monomial case $x^d$, the corresponding
hyperplane-section polynomial is the Janwa--Wilson--Rodier polynomial studied by
Hernando--McGuire, who prove that it has an absolutely irreducible factor defined
over $\F_2$ when $d$ is neither Gold nor Kasami--Welch; see
\cite[Thm.~16]{HernandoMcGuire11}. Since the equation of $\overline{X}_g\cap H$
depends only on the top homogeneous part, it follows that $\overline{X}_g\cap H$
has an absolutely irreducible component $C$ defined over $\F_q$.
Because $\overline{X}_g\cap H$ is reduced, the component $C$ is reduced. Therefore,
by \cite[Lemma~2.1]{AubryMcGuireRodier10}, there exists an absolutely irreducible
component $S$ of $\overline{X}_g$, defined over $\F_q$, such that $C\subset S$.

It remains to show that $S\not\subset \overline{V'}$.
Now $\overline{V'}\cap H$ is the union of the three diagonal lines
\[
U+V=0,\qquad U+W=0,\qquad V+W=0.
\]
On the other hand, $\overline{X}_g\cap H$ is defined by $\phi_d(U,V,W)=0$, where
\[
\phi_d(U,V,W)=\frac{U^d+V^d+W^d+(U+V+W)^d}{(U+V)(U+W)(V+W)}.
\]
We claim that none of the linear forms $U+V$, $U+W$, $V+W$ divides $\phi_d$.
Indeed, for example, set $s=U+V$. Then
\[
U^d+V^d+W^d+(U+V+W)^d
=
s\bigl(U^{d-1}+W^{d-1}\bigr)+s^2P
\]
for some polynomial $P$, since $d$ is odd. Hence $(U+V)^2=s^2$ does not divide the numerator,
so $U+V$ does not divide $\phi_d$. By symmetry, neither $U+W$ nor $V+W$ divides $\phi_d$.

Therefore no irreducible component of $\overline{X}_g\cap H$ is contained in
$\overline{V'}\cap H$, in particular $C\not\subset \overline{V'}\cap H$.
If $S\subset \overline{V'}$, then $C\subset S\cap H\subset \overline{V'}\cap H$,
a contradiction. Thus $S\not\subset \overline{V'}$.

Finally, since $S$ is a projective surface component not contained in the hyperplane at infinity,
its affine part is a nonempty absolutely irreducible component of $X_g$, defined over $\F_q$,
and it is not contained in $V'$.
\end{proof}

\begin{corollary}[Explicit non-APN bound in the generic odd-degree case]
\label{cor:nonapn-large-generic}
Fix $d\ge 5$.
Assume $d$ is odd and is neither a Gold number nor a Kasami--Welch number.
Then, with
\[
\APNmzero{d}:=\LWmzero{d-3},
\]
no polynomial $f\in\F_{2^m}[x]$ of degree $d$ is APN for any $m\ge \APNmzero{d}$.
\end{corollary}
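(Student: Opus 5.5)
The corollary follows by combining Proposition~\ref{prop:amr-geometric-consequence} with Theorem~\ref{thm:lang-weil-Janwa-Wilson-Rodier}. The only additional work is to compute $\deg\Phi_f$ and to check that the threshold is monotone in the degree.

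Fix $m\ge\APNmzero{d}$, put $q=2^m$, and let $f\in\F_q[x]$ have degree $d$ with leading coefficient $a_d\neq0$.

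\textbf{Step 1: the degree of $\Phi_f$.} The numerator $N_f$ is a sum of the contributions of the monomials $a_ix^i$. The top homogeneous part comes from $a_dx^d$ and equals
\[
a_d\bigl(x^d+y^d+z^d+(x+y+z)^d\bigr).
\]
This part is nonzero, because it is divisible by $D(x,y,z)$ and its quotient is $a_d\phi_d$. The polynomial $\phi_d$ is nonzero: for instance, the proof of Proposition~\ref{prop:amr-geometric-consequence} shows that the numerator has the nonzero linear term $s(U^{d-1}+W^{d-1})$ in $s=U+V$. Every lower-degree monomial contributes terms of degree strictly less than $d$, and these remain of degree $<d-3$ after dividing by $D$ (exact division, since $D$ divides each monomial's contribution). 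Hence
\[
\deg\Phi_f=d-3=:D,
\]
and $D\ge2\ge1$ because $d\ge5$.

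\textbf{Step 2: the geometric input.} Since $d$ is odd and is neither Gold nor Kasami--Welch, Proposition~\ref{prop:amr-geometric-consequence} applied to $g=f$ gives an absolutely irreducible component $S$ of $X_f$, defined over $\F_q$, with $S\not\subseteq V$. This is exactly the hypothesis of Theorem~\ref{thm:lang-weil-Janwa-Wilson-Rodier}. That theorem was stated for a fixed $q$, and the proposition holds for every $q=2^m$, so we may apply it at the given $q$.

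\textbf{Step 3: conclusion.} Theorem~\ref{thm:lang-weil-Janwa-Wilson-Rodier} then yields $X_f(\F_q)\setminus V(\F_q)\neq\varnothing$ whenever $m\ge\LWmzero{D}=\LWmzero{d-3}=\APNmzero{d}$. By Lemma~\ref{lem:apn-Janwa-Wilson-Rodier}, $f$ is not APN. Because the degree computation in Step 1 is exact, no monotonicity of $\LWmzero{\cdot}$ in $D$ is needed. If one preferred to use only the bound $\deg\Phi_f\le d-3$, one would instead observe that $\LWlambda{D}$ is increasing in $D$, which is clear since each summand under the square root and outside it increases for $D\ge2$.

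\textbf{Main obstacle.} The only delicate point is Step 1, namely ruling out cancellation in the top-degree part of $N_f$. This is handled by the nonvanishing of $\phi_d$, which the earlier linear-term computation already gives.
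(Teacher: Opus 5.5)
Your proposal is correct and follows the paper's proof. Proposition~\ref{prop:amr-geometric-consequence} supplies the absolutely irreducible $\F_q$-component off $V$, Theorem~\ref{thm:lang-weil-Janwa-Wilson-Rodier} is applied with $D=\deg\Phi_f=d-3$, and Lemma~\ref{lem:apn-Janwa-Wilson-Rodier} concludes; your Step~1 just spells out the degree computation (nonvanishing of the top part $a_d\phi_d$), which the paper only asserts in one phrase (``because $d$ is odd'').
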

\begin{proof}
Let $q=2^m$. Since $d$ is odd and neither Gold nor Kasami--Welch, Proposition~\ref{prop:amr-geometric-consequence} applied to $f$ provides an absolutely irreducible component of $X_f$ defined over $\F_q$ and not contained in $V$.
By Theorem~\ref{thm:lang-weil-Janwa-Wilson-Rodier}, $X_f(\F_q)\setminus V(\F_q)\neq\varnothing$ for all $m\ge \LWmzero{d-3}=\APNmzero{d}$, so $f$ is not APN by Lemma~\ref{lem:apn-Janwa-Wilson-Rodier}.
Thus no polynomial of degree $d$ over $\F_{2^m}$ is APN in the stated range.
\end{proof}

\subsection*{A constant-derivative family in even degree}

The odd-degree theorem has an immediate even-degree consequence. The cleanest proof is obtained directly from the difference table; no additional irreducibility argument is needed.

\begin{lemma}[Constant-derivative normal form]
\label{lem:constant-derivative-normal-form}
Let $q$ be a power of $2$ and $f\in\F_q[x]$. Then $f'$ is a constant $a\in\F_q$ if and only if
\[
f(x)=ax+g(x^2)+c
\]
for some $g\in\F_q[x]$ and $c\in\F_q$. In this case $f'\equiv a$; in particular $f'$ is nowhere zero if and only if $a\neq0$, and $\deg f=2\deg g$ whenever $\deg g\ge1$.
\end{lemma}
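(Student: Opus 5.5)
The plan is to split $f$ into even-index and odd-index monomials and then compute the formal derivative directly in characteristic $2$. Write $f(x)=\sum_{i}b_ix^i$. Then
\[
f'(x)=\sum_i i\,b_i x^{i-1}=\sum_{i\ \mathrm{odd}} b_i x^{i-1},
\]
because $i\equiv 0$ in $\F_q$ for even $i$ and $i\equiv1$ for odd $i$. The monomials $x^{i-1}$ with $i$ odd have pairwise distinct even exponents. So $f'$ is the constant $a$ if and only if $b_1=a$ and $b_i=0$ for every odd $i\ge3$, with no condition on the even-index coefficients.

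For the forward direction, assume $f'\equiv a$. By the computation above, every odd-index monomial of $f$ other than $ax$ vanishes, so $f(x)=ax+\sum_{j}b_{2j}x^{2j}$. I will set $c:=b_0$ and $g(y):=\sum_{j\ge1}b_{2j}y^j$, which gives $f=ax+g(x^2)+c$. For the converse, assume $f=ax+g(x^2)+c$. The term $g(x^2)$ involves only even powers of $x$, so its formal derivative is zero by the same parity observation. The constant $c$ differentiates to $0$ and $ax$ differentiates to $a$, hence $f'\equiv a$.

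The remaining assertions are immediate. A constant polynomial $a$ is nowhere zero on $\F_q$ exactly when $a\ne0$. If $\deg g=e\ge1$, then $g(x^2)$ has degree $2e\ge2$, which is strictly larger than the degree at most $1$ of $ax+c$. Therefore the leading term of $f$ comes from $g(x^2)$, and $\deg f=2e$.

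I do not expect a genuine obstacle. The only points needing care are that the constant $c$ is absorbed by either $c$ or the constant term of $g$ (the statement allows either choice), and that the degree claim needs $\deg g\ge1$ so that $ax$ cannot dominate.
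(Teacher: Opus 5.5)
Your proposal is correct and follows essentially the same route as the paper: in characteristic $2$ only the odd-index coefficients survive in $f'$, so $f'$ is constant exactly when the odd part of $f$ is $a_1x$. Collecting the even-index terms into $g(x^2)$ and the constant term into $c$ then gives the normal form, and the degree and nonvanishing assertions follow just as you describe.
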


\begin{proof}
Write $f=\sum_{j\ge0}a_jx^j$. In characteristic two $f'=\sum_{j\text{ odd}}a_jx^{j-1}$, so $f'$ is constant if and only if $a_j=0$ for every odd $j\ge3$, i.e.\ the odd part of $f$ is $a_1x$. Collecting the even-degree terms gives $f(x)=a_1x+h(x^2)+a_0$; set $a=a_1$, $g=h$, $c=a_0$. The remaining assertions are immediate.
\end{proof}

\begin{lemma}[Differential-uniformity transfer under squaring]
\label{lem:differential-transfer}
Let $q=2^m$ and
\[
f(x)=ax+g(x^2)+c\in\F_q[x].
\]
For every $t\in\F_q^\times$ and $b\in\F_q$,
\[
\#\{x\in\F_q:D_tf(x)=b\}
=
\#\{u\in\F_q:D_{t^2}g(u)=b+at\}.
\]
Consequently $f$ and $g$ have the same differential uniformity; in particular, $f$ is APN if and only if $g$ is APN.
\end{lemma}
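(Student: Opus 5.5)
The plan is to compute the derivative $D_tf$ explicitly and then use the Frobenius bijection $x\mapsto x^2$ of $\F_q$. Since the characteristic is two, for $t\in\F_q^\times$ we have
\[
D_tf(x)=a(x+t)+ax+g\bigl((x+t)^2\bigr)+g(x^2)+c+c=at+g(x^2+t^2)+g(x^2)=at+D_{t^2}g(x^2).
\]
Here we used $(x+t)^2=x^2+t^2$. Consequently $D_tf(x)=b$ holds exactly when $D_{t^2}g(x^2)=b+at$, again because $-at=at$.

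Next, the map $\sigma:x\mapsto x^2$ is a bijection of $\F_q$, since it is the Frobenius automorphism of a finite field of characteristic two. Substituting $u=x^2$ therefore gives a bijection between $\{x\in\F_q: D_tf(x)=b\}$ and $\{u\in\F_q: D_{t^2}g(u)=b+at\}$, which is the displayed identity.

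For the consequence about differential uniformity, note that $t\mapsto t^2$ permutes $\F_q^\times$. For each fixed $t$, the map $b\mapsto b+at$ permutes $\F_q$. The multiset of counts $\{\#\{x:D_tf(x)=b\}\}_{b\in\F_q}$ is thus exactly the multiset of counts $\{\#\{u:D_{t^2}g(u)=b'\}\}_{b'\in\F_q}$. Taking the maximum over $b$ and then over $t\in\F_q^\times$, which is the same as taking it over $s=t^2\in\F_q^\times$, yields $\delta_f=\delta_g$. In particular $f$ is APN if and only if $g$ is.

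No step here is a real obstacle; the only points needing care are the two bijectivity claims (Frobenius on $\F_q$ and on $\F_q^\times$) and the fact that the linear term contributes the constant $at$, independent of $x$.
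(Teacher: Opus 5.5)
Your proposal is correct and follows essentially the same route as the paper: the identity $D_tf(x)=at+D_{t^2}g(x^2)$, followed by the substitution $u=x^2$ via the Frobenius bijection, and then taking maxima using that $t\mapsto t^2$ permutes $\F_q^\times$. Your explicit remark that $b\mapsto b+at$ permutes $\F_q$ for fixed $t$ spells out a step the paper leaves implicit when it takes the maximum over $t$ and $b$.
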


\begin{proof}
In characteristic two,
\[
D_tf(x)=at+g\bigl((x+t)^2\bigr)+g(x^2)
=at+D_{t^2}g(x^2).
\]
The Frobenius map $x\mapsto x^2$ is a bijection of $\F_q$, and it also permutes $\F_q^\times$. Hence the substitution $u=x^2$ gives the displayed equality for every $t\neq0$ and $b$. Taking maxima over $t$ and $b$ proves the assertion about differential uniformity.
\end{proof}

\begin{corollary}[Even-degree non-APN bound]
\label{cor:nonapn-even}
Let $e\ge5$ be odd and neither a Gold nor a Kasami--Welch number, and set $d=2e$. Then no polynomial of the form
\[
f(x)=ax+g(x^2)+c\in\F_{2^m}[x],\qquad a\in\F_{2^m}^\times,\ \deg g=e,
\]
is APN for any $m\ge\APNmzero{e}$, with $\APNmzero{\cdot}$ the polynomial-degree threshold of Corollary~\textup{\ref{cor:nonapn-large-generic}}.
\end{corollary}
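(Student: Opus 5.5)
The plan is to derive Corollary~\ref{cor:nonapn-even} from Lemma~\ref{lem:differential-transfer} and Corollary~\ref{cor:nonapn-large-generic}; no new geometry is needed.

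Fix $m\ge\APNmzero{e}$ and let $f(x)=ax+g(x^2)+c$ with $a\in\F_{2^m}^\times$ and $\deg g=e$. By Lemma~\ref{lem:differential-transfer}, $f$ and $g$ have the same differential uniformity over $\F_{2^m}$. In particular, $f$ is APN if and only if $g$ is APN. It therefore suffices to show that $g$ is not APN over $\F_{2^m}$.

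The polynomial $g\in\F_{2^m}[x]$ has degree exactly $e$. Here $e\ge5$ is odd and is neither a Gold nor a Kasami--Welch number. Corollary~\ref{cor:nonapn-large-generic}, applied with $d=e$, says that no polynomial of degree $e$ over $\F_{2^m}$ is APN once $m\ge\APNmzero{e}$. Hence $g$ is not APN, so $f$ is not APN.

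The only point to check is that Corollary~\ref{cor:nonapn-large-generic} is applied with the degree of $g$, not the degree $d=2e$ of $f$. This is exactly why the threshold in the statement is $\APNmzero{e}$.

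As a remark, the hypothesis $a\neq0$ plays no role in the non-APN conclusion; Lemma~\ref{lem:differential-transfer} holds for any $a$. It matters only for the interpretation: by Lemma~\ref{lem:constant-derivative-normal-form}, $f'\equiv a\neq0$, so any permutation of this shape would be nowhere critical. The corollary therefore rules out such APN permutations above the threshold, which is consistent with Conjecture~\ref{conj:critical}.

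No step is a real obstacle; the corollary is a direct combination of the two cited results.
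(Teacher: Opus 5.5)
Your proposal is correct and is essentially the paper's own proof: you combine Lemma~\ref{lem:differential-transfer} (equal differential uniformity of $f$ and $g$) with Corollary~\ref{cor:nonapn-large-generic} applied to $g$, which has degree $e$, giving the threshold $\APNmzero{e}$. Your side remark is also accurate: $a\neq0$ is not needed for the non-APN conclusion and only matters for the nowhere-critical interpretation.
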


\begin{proof}
Corollary~\ref{cor:nonapn-large-generic} shows that $g$ is not APN for $m\ge\APNmzero{e}$. Lemma~\ref{lem:differential-transfer} gives equality of the differential uniformities of $f$ and $g$, so $f$ is not APN in the same range.
\end{proof}

We are now able to handle the degree-3 case, by proving one more lemma.

\begin{lemma}
\label{lem:degree-3}
Let $q=2^m$ and let $f\in\F_q[x]$ be any permutation polynomial of degree $3$. Then $f$ has a critical point in $\F_q$.
\end{lemma}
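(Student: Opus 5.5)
Write $f(x)=a_3x^3+a_2x^2+a_1x+a_0$ with $a_3\neq0$. In characteristic two the formal derivative is
\[
f'(x)=3a_3x^2+2a_2x+a_1=a_3x^2+a_1 .
\]
Since squaring is a bijection of $\F_q$, the equation $a_3x^2=a_1$ always has exactly one solution $x=(a_1/a_3)^{1/2}\in\F_q$. So every cubic over $\F_q$, permutation or not, has a rational critical point, and the lemma follows at once. This direct computation is the whole argument; no appeal to the permutation hypothesis is needed.

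The key steps in order are as follows.

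\begin{enumerate}[(1)]
\item Compute $f'$ and use $2=0$ and $3=1$ in $\F_q$ to reduce it to $a_3x^2+a_1$.
\item Observe that $x\mapsto x^2$ permutes $\F_q$ because it is the Frobenius automorphism.
\item Solve $x^2=a_1a_3^{-1}$ to obtain the critical point $a=(a_1a_3^{-1})^{2^{m-1}}$.
\item Check directly that $f'(a)=a_3a^2+a_1=a_1+a_1=0$.
\end{enumerate}

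There is no real obstacle. The only point needing care is to remember that the cross term $2a_2x$ vanishes in characteristic two, so $f'$ is a pure square plus a constant.

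As a consistency check, Theorem~\ref{thm:rh-obstruction} shows that a nowhere-critical cubic permutation would need $f'$ to have no zeros on $\F_q$. That is impossible, since $f'$ is the square of the linear form $\sqrt{a_3}\,x+\sqrt{a_1}$ in characteristic two. The lifting consequence in Corollary~\ref{cor:cubic-lifting} then follows from Theorem~\ref{thm:gr-permutation-criterion}.

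Note that $\deg f=3<q$ requires $m\ge2$. For $m=1$ the polynomial is not reduced, but the critical-point statement as formulated still holds by the same computation.
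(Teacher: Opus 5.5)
Your proof is correct and is the paper's argument: reduce $f'$ to $a_3x^2+a_1$ in characteristic two, then use that Frobenius is a bijection of $\F_q$ to solve $a_3x^2=a_1$. The only difference is that the paper treats $a_1=0$ as a separate case (critical point $0$), which your single formula $a=(a_1a_3^{-1})^{2^{m-1}}$ already covers.
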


\begin{proof}
Write $f(x)=ax^3+bx^2+cx+d$ with $a\neq 0$. In characteristic $2$, the formal derivative is $f'(x)=ax^2+c$ (the $bx^2$ term contributes zero since $\binom{2}{1}=2\equiv 0$). If $c=0$ then $f'(0)=0$. If $c\neq 0$, the equation $f'(x)=0$ reduces to $x^2=c/a$, which has a solution in $\F_q$ because the Frobenius $x\mapsto x^2$ is a bijection of $\F_q$. In both cases $f$ has a critical point.
\end{proof}

\begin{proof}[Proof of Theorem~\textup{\ref{thm:high-degree}}]
This is Corollary~\ref{cor:nonapn-large-generic}, with the degree of the Janwa--Wilson--Rodier surface equal to $d-3$ because $d$ is odd.
\end{proof}

\begin{proof}[Proof of Theorem~\textup{\ref{thm:high-degree-even}}]
Write $f(x)=ax+g(x^2)+c$ with $a\neq0$ and $\deg g=e$; then $f'\equiv a\neq0$ by Lemma~\ref{lem:constant-derivative-normal-form}. Lemma~\ref{lem:differential-transfer} shows that $f$ is APN over $\F_{2^m}$ if and only if $g$ is. Since $e$ is odd, $e\ge5$, and neither Gold nor Kasami--Welch, Corollary~\ref{cor:nonapn-large-generic} applied to $g$ shows that $g$, and hence $f$, is not APN for $m\ge\APNmzero{e}$.
\end{proof}

\section{Examples and exceptional families}
\label{sec:examples}

The elementary monomial cases are compatible with Conjecture~\ref{conj:critical}. If $f(x)=ax^e$ with $e>1$, then $f'(x)=ae x^{e-1}$. When $e$ is even the derivative is identically zero, while for odd $e$ it vanishes at $0$. This covers all APN power permutations, including the Gold, Kasami, Welch, Niho, inverse, and Dobbertin families whenever their defining power map is a permutation. In particular, the reduced representative $x^{q-2}$ of the inverse function has identically zero derivative in characteristic two.

The cubic case is also immediate and does not rely on computation. Lemma~\ref{lem:degree-3} shows that the derivative of every cubic polynomial has a rational zero. The non-monomial families and sporadic examples require their reduced representatives to be examined individually; we make no general computational claim about them here.

\section{Conclusion}
\label{sec:conclusion}

The first point of this paper is that the lifting question must be formulated for the unique reduced representative of a finite-field function. Without that normalization, the construction in the introduction gives an immediate counterexample to the literal statement for arbitrary polynomial representatives. With the reduced representative fixed, the standard Galois-ring permutation criterion makes the lifting question equivalent to the reduced critical-point conjecture.

The ramification interpretation is deliberately limited to rational points. Theorem~\ref{thm:rh-obstruction} says that a reduced permutation polynomial with no rational critical point is unramified at every finite rational point. It may still ramify at non-rational points over the algebraic closure, so no concentration of the different divisor at infinity is asserted or needed.

It is important, however, to state carefully what is genuinely new in the odd-degree case. For odd degree $d\ge 5$ outside the Gold and Kasami--Welch exponent families, Aubry--McGuire--Rodier already proved that such polynomials are not APN over $\F_{q^n}$ for all sufficiently large $n$. Since APN permutation polynomials are a subclass of APN polynomials, this already excludes APN permutation polynomials asymptotically in that range. Thus our contribution there is not a new qualitative eventual nonexistence result. Rather, the point is that we extract from the same geometric approach an \emph{explicit threshold} that is suitable for the critical-point reformulation of the lifting conjecture.

More precisely, every cubic permutation polynomial has a rational critical point. For odd degree $d\ge5$ outside the Gold and Kasami--Welch families, Theorem~\ref{thm:high-degree} shows that no polynomial of degree $d$ over $\F_{2^m}$ is APN once
\[
\begin{aligned}
m&\ge \APNmzero{d}=\LWmzero{d-3}
=\Bigl\lceil 2\log_2\!\bigl(\LWlambda{d-3}+1\bigr)\Bigr\rceil,\\
\LWlambda{D}&=\frac{(D-1)(D-2)+\sqrt{(D-1)^2(D-2)^2+4\bigl(5D^{13/3}+3D\bigr)}}{2}.
\end{aligned}
\]
The proof combines the Janwa--Wilson--Rodier surface with the explicit Cafure--Matera estimate, converting the known eventual non-APN geometry into an effective statement. Lemma~\ref{lem:differential-transfer} then gives an even-degree consequence: writing $f(x)=ax+g(x^2)+c$ with $a\neq0$, the substitution $u=x^2$ shows that $f$ and $g$ have equal differential uniformity. Thus Theorem~\ref{thm:high-degree-even} rules out APN behavior for $d=2e$ with $e$ odd outside the exceptional families, in the same explicit range $m\ge\APNmzero{e}$. These polynomials are nowhere critical, so the conclusion is consistent with the reduced critical-point conjecture.

It is also natural to compare this threshold with Rodier's earlier effective bounds~\cite{Rodier09}. Under the hypothesis that the associated surface is absolutely irreducible, Rodier obtains a non-APN criterion of the shape
\[
d < 0.45\,q^{1/4}+0.5,
\]
and under the stronger hypothesis that the projective surface has only isolated singularities he improves this to
\[
d < q^{1/4}+4.
\]
Thus, under these conditions, Rodier's effective bounds are asymptotically stronger than ours as they require only $m\gtrsim 4\log_2 d$, whereas our explicit threshold behaves like
\[
\APNmzero{d}=\frac{13}{3}\log_2 d+O(1)
\qquad (d\to\infty).
\]
On the other hand, the hypotheses are different. Our argument uses Proposition~\ref{prop:amr-geometric-consequence} only to produce an absolutely irreducible component defined over the ground field and not contained in the diagonal arrangement; it does not require the whole surface to be absolutely irreducible or the projective surface to have only isolated singularities.

What remains open is the reduced critical-point conjecture for APN permutations not covered by the elementary monomial and cubic arguments. The effective odd-degree theorem eliminates all sufficiently large fields in the generic non-Gold/non-Kasami--Welch range, but this is a nonexistence result rather than a critical-point theorem for existing APN permutations. Natural next steps are to study the finitely many fields below the threshold, to analyze the exceptional hyperplane sections more finely, and to investigate whether weight identities of the type developed by Musukwa~\cite{musukwa23,musukwa24} can force rational zeros of the derivative.

\bibliographystyle{plain}

\end{document}